\theoremstyle{plain}
\newtheorem{theorem}[subsection]{Theorem}
\newtheorem{lemma}[subsection]{Lemma}
\newtheorem{proposition}[subsection]{Proposition}
\theoremstyle{definition}
\newtheorem{definition}[subsection]{Definition}
\newtheorem{remark}[subsection]{Remark}
\newtheorem{example}[subsection]{Example}
\newcommand{\defn}{\textbf}
\newcommand{\UCE}{condition~{\rm (UCE)}}
\newcommand{\UCEspecial}{condition~(UCE)}
\newcommand{\UCEcapital}{Condition~(UCE)}
\renewcommand{\b}{\ensuremath{I}}
\newcommand{\cc}{\ensuremath{J}}
\newcommand{\meet}{\ensuremath{\wedge}}
\newcommand{\comp}{\raisebox{0.2mm}{\ensuremath{\scriptstyle{\circ}}}}
\newcommand{\pr}{\ensuremath{\pi}}
\newcommand{\tensor}{\ensuremath{\otimes}}
\newcommand{\To}{\Rightarrow}
\newcommand{\Ker}{\ensuremath{\mathrm{Ker}}}
\renewcommand{\H}{\ensuremath{\mathrm{H}}}
\renewcommand{\hom}{\ensuremath{\mathrm{Hom}}}
\newcommand{\U}{\ensuremath{\mathrm{U}}}
\newcommand{\A}{\ensuremath{\mathcal{A}}}
\newcommand{\B}{\ensuremath{\mathcal{B}}}
\newcommand{\C}{\ensuremath{\mathcal{C}}}
\newcommand{\Ab}{\ensuremath{\mathsf{Ab}}}
\newcommand{\ab}{\ensuremath{\mathsf{ab}}}
\newcommand{\Centr}{\ensuremath{\mathsf{Centr}}}
\newcommand{\CExt}{\ensuremath{\mathsf{CExt}}}
\newcommand{\Ext}{\ensuremath{\mathsf{Ext}}}
\newcommand{\Leibniz}{\ensuremath{\mathsf{Leib}}}
\newcommand{\Lie}{\ensuremath{\mathsf{Lie}}}
\newcommand{\Mod}{\ensuremath{\mathsf{Mod}}}
\newcommand{\NAAlg}{\ensuremath{\mathsf{NAAlg}}}
\newcommand{\PXMod}{\ensuremath{\mathsf{PXMod}}}
\newcommand{\Vect}{\ensuremath{\mathsf{Vect}}}
\newcommand{\XMod}{\ensuremath{\mathsf{XMod}}}
\newcommand{\K}{\ensuremath{\mathbb{K}}}
\newcommand{\Z}{\ensuremath{\mathbb{Z}}}
\def\pullback{
 \ar@{-}[]+R+<6pt,-1pt>;[]+RD+<6pt,-6pt>%
 \ar@{-}[]+D+<1pt,-6pt>;[]+RD+<6pt,-6pt>}
\def\splitpullback{%
 \ar@{-}[]+R+<6pt,-.51ex>;[]+RD+<6pt,-6pt>%
 \ar@{-}[]+D+<.51ex,-6pt>;[]+RD+<6pt,-6pt>}
\begin{document}

\title[Universal central extensions in semi-abelian categories]{Universal central extensions\\ in semi-abelian categories}

\author{Jos\'e Manuel Casas}
\author{Tim Van~der Linden}

\email{jmcasas@uvigo.es}
\email{tim.vanderlinden@uclouvain.be}

\address{Dpto.\ de Matem\'atica Aplicada I, Universidad de Vigo, Escola Enxe\~nar\'ia Forestal, Campus Universitario A Xunquiera, E-36005 Pontevedra, Spain}
\address{Institut de recherche en math\'ematique et physique, Universit\'e catholique de Louvain, chemin du cyclotron~2 bte~L7.01.02, B-1348 Louvain-la-Neuve, Belgium}
\address{CMUC, University of Coimbra, 3001--454 Coimbra, Portugal}

\thanks{The first author's research was supported by Ministerio de Ciencia e Innovaci\'on (grant number MTM2009-14464-C02-02, includes European FEDER support) and by Xunta de Galicia (grant number Incite09 207 215 PR). The second author works as \emph{charg\'e de recherches} for Fonds de la Recherche Scientifique--FNRS. His research was supported by Centro de Matem\'a\-tica da Universidade de Coimbra and by Funda\c c\~ao para a Ci\^encia e a Tecnologia (grant number SFRH/BPD/38797/2007). We thank the University of Coimbra, the University of Vigo and the Banff International Research Station for their kind hospitality.}

\begin{abstract}
Basing ourselves on Janelidze and Kelly's general notion of central extension, we study universal central extensions in the context of semi-abelian categories. We consider a new fundamental condition on composition of central extensions and give examples of categories which do, or do not, satisfy this condition.
\end{abstract}

\keywords{Categorical Galois theory; semi-abelian category; homology; Baer invariant; universal central extension}

\subjclass[2010]{17A32, 18B99, 18E99, 18G50, 20J05}

\maketitle

\section*{Introduction}

In this article we explain how, using Janelidze and Kelly's general notion of central extension~\cite{Janelidze-Kelly}, the classical theory of universal central extensions may be considered in the context of semi-abelian categories~\cite{Janelidze-Marki-Tholen}. Our main point is that, while \emph{most} of the results valid for groups and Lie algebras (see, for instance, \cite{Milnor, Weibel}) generalise without any difficulty to extensions, central with respect to a chosen Birkhoff subcategory $\B$ of a semi-abelian category $\A$, this setting turns out to be too weak for some of the most basic results, valid in the classical examples, to hold---even when $\B=\Ab(\A)$ is determined by the abelian objects in $\A$.

We have to impose an additional condition which we chose to call the \emph{universal central extension} \emph{\UCE}, asking that for $\Ab(\A)$-central extensions $f\colon{B\to A}$ and $g\colon{C\to B}$ also the composite extension $f\comp g$ is $\Ab(\A)$-central, as soon as $B$ is an $\Ab(\A)$-perfect object. (Recall that in general, central extensions need not compose.) Under \UCE\ and, as it turns out, only then, standard recognition results such as Theorem~\ref{Theorem-Homology} hold. Furthermore, as Example~\ref{Counter1} shows, \UCE\ is not automatic: there exist semi-abelian categories which do not satisfy it. This immediately gives rise to the following question, which is not yet fully answered in the present paper: \emph{When does \UCE\ hold?} We can give examples and counterexamples, but thus far there is no elementary characterisation. This problem---of finding good minimal hypotheses for \UCE---is the subject of current work-in-progress~\cite{GrayVdL1}.

\section{Basic definitions and first results}\label{Section-Basic-Definitions}

\subsection{Semi-abelian categories}
A category is \defn{semi-abelian} when it is pointed, Barr exact and Bourn protomodular with binary coproducts~\cite{Janelidze-Marki-Tholen}. We recall that a pointed and regular category is \defn{Bourn protomodular}~\cite{Bourn1991} if and only if the \defn{(Regular) Short Five Lemma} holds: this means that for any commutative diagram such as~\eqref{Short-Five-Lemma} below where $f$ and $f'$ are regular epimorphisms, $k$ and $a$ being isomorphisms implies that $b$ is an isomorphism. It is well known that all varieties of $\Omega$-groups~\cite{Higgins} are semi-abelian categories.

\begin{lemma}\cite{Bourn1991, Bourn2001}\label{Lemma-Pullback}
Consider a morphism of short exact sequences
\begin{equation}\label{Short-Five-Lemma}
\vcenter{\xymatrix{\Ker(f') \ar@{{ |>}->}[r]^-{\ker(f')} \ar[d]_-k & B' \ar@{-{ >>}}[r]^-{f'} \ar[d]_-b & A' \ar[d]^-a \\ \Ker(f) \ar@{{ |>}->}[r]_-{\ker(f)} & B \ar@{-{ >>}}[r]_-{f} & A.}}
\end{equation}
\begin{enumerate}
\item The right hand side square $f\comp b=a\comp f'$ is a pullback iff $k$ is an isomorphism.
\item The left hand side square $\ker(f)\comp k=b\comp \ker(f')$ is a pullback iff $a$ is mono.
\end{enumerate}
\end{lemma}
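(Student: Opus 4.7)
The plan is to handle both biconditionals by combining standard pullback-pasting with the two structural features of the ambient category: stability of regular epimorphisms under pullback (regularity), and the Short Five Lemma (protomodularity).

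For part~(1), the direction \emph{pullback $\Rightarrow$ $k$ iso} is a formal computation: writing $\Ker(f')$ as the pullback of $0\to A'$ along $f'$ and pasting with the assumed pullback identifies it with the pullback of $0\to A$ along $f$, which is $\Ker(f)$ via~$k$. For the converse, I would form the pullback $P=B\times_A A'$ together with the canonical comparison morphism $\tilde b\colon B'\to P$ induced by $b$ and $f'$. Pullback-stability of regular epimorphisms ensures that $P\to A'$ is a regular epimorphism with kernel canonically $\Ker(f)$, so $\tilde b$ fits into a morphism of short exact sequences which is the identity on the base and (under this identification) $k$ on the kernels. The Short Five Lemma then forces $\tilde b$ to be an isomorphism, making the right square a pullback.

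For part~(2), the idea is to compute the pullback of $\ker(f)$ along~$b$ in two ways. On the one hand it equals the pullback of $0\to A$ along $a\comp f'=f\comp b$, which, factored through $A'$, is $\Ker(a)\times_{A'}B'$. On the other hand, the left square being a pullback would identify this object with $\Ker(f')$, and the induced morphism $\Ker(f')\to\Ker(a)\times_{A'}B'$ has zero projection onto~$\Ker(a)$, since $f'\comp\ker(f')=0$ and $\ker(a)$ is monic. Now the projection $\Ker(a)\times_{A'}B'\to\Ker(a)$ is the pullback of the regular epimorphism $f'$ along $\ker(a)$, hence is itself a regular epimorphism; a regular epimorphism which factors through $0$ has zero codomain, so $\Ker(a)=0$ and $a$ is monic. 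Conversely, if $a$ is monic then $\Ker(a)=0$ and the same pasting identifies the pullback of $\ker(f)$ along~$b$ with $\Ker(f')$ on the nose.

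The main obstacle I expect is the reverse direction of~(1): protomodularity, through the Short Five Lemma, is exactly what is needed to upgrade an isomorphism of kernels and bases to an isomorphism of middles, and one cannot hope to prove this in a merely regular or merely pointed setting. All remaining steps reduce to routine pasting of pullbacks together with the pullback-stability of regular epimorphisms.
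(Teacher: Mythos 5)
Your argument is correct, and it is worth noting that the paper itself offers no proof of this lemma at all --- it is quoted from Bourn's papers --- so there is nothing internal to compare against; your reconstruction follows the standard route. Both directions check out: in (1), the ``pullback $\Rightarrow k$ iso'' direction is indeed pure pasting of the kernel square of $f'$ with the right-hand square, and the converse is exactly the intended application of the Regular Short Five Lemma to the comparison $\tilde b\colon B'\to B\times_A A'$, using pullback-stability of regular epimorphisms to know that the projection $B\times_A A'\to A'$ is again an extension with kernel $\Ker(f)$. In (2), the identification of the pullback of $\ker(f)$ along $b$ with $\Ker(a)\times_{A'}B'$ is correct, the projection to $\Ker(a)$ is a regular epimorphism by regularity, and the vanishing of that projection when the left-hand square is a pullback does force $\Ker(a)=0$; the converse is again the pasting lemma (outer rectangle and kernel square of $f$ are pullbacks, hence so is the left-hand square). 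One small correction to your closing assessment: the final step ``$\Ker(a)=0$, so $a$ is monic'' is \emph{not} a consequence of regularity and pasting alone --- in a merely pointed regular category a morphism with zero kernel need not be monic (monoids already give counterexamples) --- it is a standard consequence of pointed protomodularity. So protomodularity enters both parts, not only the converse of (1); with that fact cited, the proof is complete.
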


The first statement implies that any pullback square between regular epimorphisms (that is, any square $f\comp b=a\comp f'$ as in~\eqref{Short-Five-Lemma}) is a pushout. It is also well known that the regular image of a kernel is a kernel~\cite{Janelidze-Marki-Tholen}. In any semi-abelian category, classical homological lemma's such as the Snake Lemma and the $3\times3$ Lemma are valid; for further details and many other results we refer the reader to~\cite{Janelidze-Marki-Tholen, Borceux-Bourn}.

\subsection{Birkhoff subcategories}
The notion of central extension introduced in~\cite{Janelidze-Kelly} is defined with respect to a chosen subcategory $\B$ of the base category $\A$: a \defn{Birkhoff subcategory} $\B$ of a semi-abelian category~$\A$ is a full and reflective subcategory, closed under subobjects and regular quotients. We denote the left adjoint by $\b\colon{\A\to \B}$ and write the components of its unit $\eta_{A}\colon{A\to \b(A)}$. A Birkhoff subcategory of a variety of universal algebras is the same thing as a subvariety. Throughout the text, we fix a Birkhoff subcategory $\B$ of a semi-abelian category~$\A$.

\subsection{Extensions and central extensions}\label{Subsection-Central-Extensions}

An \defn{extension} in $\A$ is a regular epimorphism. A morphism of extensions is a commutative square between them, and thus we obtain the category $\Ext(\A)$ of extensions in $\A$. 

With respect to $\B\subseteq \A$, there are notions of \emph{trivial}, \emph{normal} and \emph{central} extension. An extension $f\colon{B\to A}$ in $\A$ is said to be \defn{trivial} if and only if the induced square
\begin{equation}\label{Birkhoff-Square}
\vcenter{\xymatrix{B \ar[r]^-{f} \ar[d]_-{\eta_{B}} & A \ar[d]^-{\eta_{A}}\\
\b(B) \ar[r]_-{\b(f)} & \b(A)}}
\end{equation}
is a pullback. The extension $f$ is \defn{normal} if and only if one of the projections $f_{0}$,~$f_{1}$ in the kernel pair $(B\times_A B,f_{0},f_{1})$ of $f$ is trivial. Finally, $f$ is said to be \defn{central} if and only if there exists an extension $g\colon{C\to A}$ such that the pullback~$g^{*}(f)$ of $f$ along~$g$ is~trivial.

Clearly, every normal extension is central; in the present context, the converse also holds, and thus the concepts of normal and central extension coincide. Furthermore, a split epimorphism is a trivial extension if and only if it is central~\cite[Theorem~4.8]{Janelidze-Kelly}. Finally, central extensions are pullback-stable~\cite[Proposition~4.3]{Janelidze-Kelly}.

Since we shall often be considering several Birkhoff subcategories of a given category at the same time, we usually indicate which one we mean by prefixing, as in ``$\B$-central'', ``$I$-trivial'', etc.

\subsection{Perfect objects}
An object $P$ of $\A$ is called \defn{perfect} when $\b(P)$ is the zero object $0$ of $\B$. If $f\colon{B\to A}$ is an extension and~$B$ is $\B$-perfect then so is $A$, because the reflector $\b$ preserves regular epimorphisms, and a regular quotient of zero is zero.

\begin{lemma}\label{Lemma-Perfect-and-Central}
Let $P$ be a $\B$-perfect object and $f\colon{B\to A}$ an extension.
\begin{enumerate}
\item If $f$ is $\B$-trivial then the map
\[
\hom(P,f)=f\comp(-)\colon{\hom(P,B)\to \hom(P,A)}
\]
is a bijection;
\item if $f$ is $\B$-central then $\hom(P,f)$ is an injection.
\end{enumerate}
If $\hom(P,f)$ is an injection for every $\B$-trivial extension $f$ then $P$ is $\B$-perfect.
\end{lemma}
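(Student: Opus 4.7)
The plan is to handle the three statements in order. The common thread, used repeatedly, is that naturality of $\eta$ together with $\b(P)=0$ forces every composite $\eta_{A}\comp\alpha$ with $\alpha\colon P\to A$ to vanish, since $\eta_{A}\comp\alpha=\b(\alpha)\comp\eta_{P}=0$. For (1), given a $\B$-trivial $f$, the Birkhoff square~\eqref{Birkhoff-Square} is a pullback. An arbitrary $\alpha\colon P\to A$ therefore pairs with the zero map $0\colon P\to\b(B)$ to form a compatible cone over the cospan $\b(B)\to\b(A)\leftarrow A$, and the pullback produces a unique $\beta\colon P\to B$ with $f\comp\beta=\alpha$, so $\hom(P,f)$ is surjective. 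Injectivity is the same uniqueness: two preimages of $\alpha$ induce the same cone $(\alpha,0)$ under $\eta_{B}$, hence coincide.

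For (2), a $\B$-central extension is $\B$-normal in this setting, so a projection $f_{0}\colon B\times_{A}B\to B$ of its kernel pair is $\B$-trivial. Given $u,v\colon P\to B$ with $f\comp u=f\comp v$, the universal property of the kernel pair produces $w\colon P\to B\times_{A}B$ with $f_{0}\comp w=u$ and $f_{1}\comp w=v$, while the diagonal $\Delta\colon B\to B\times_{A}B$ supplies a second lift of $u$ through $f_{0}$, namely $\Delta\comp u$. Applying (1) to the $\B$-trivial extension $f_{0}$ makes $\hom(P,f_{0})$ a bijection, whence $w=\Delta\comp u$, and so $v=f_{1}\comp w=f_{1}\comp\Delta\comp u=u$.

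For the converse, I would test the hypothesis on the extensions $X\to 0$ for $X\in\B$; each is $\B$-trivial since its Birkhoff square reduces to $\eta_{X}\colon X\to\b(X)$ (an isomorphism) on one side and the identity of $0$ on the other. Injectivity of $\hom(P,X)\to\hom(P,0)$ then forces $\hom(P,X)=0$ for every $X\in\B$, and the Birkhoff adjunction $\hom_{\A}(P,X)\cong\hom_{\B}(\b(P),X)$ specialised to $X=\b(P)$ yields $\mathrm{id}_{\b(P)}=0$, i.e.\ $\b(P)=0$.

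I expect the main obstacle to be the argument of~(2): one must notice that both $w$ and $\Delta\comp u$ lift $u$ through the $\B$-trivial extension $f_{0}$, so that part~(1) may be invoked to equate them; everything else is formal. The converse requires the small leap of choosing $X\to 0$ as the test extensions, but this is a one-line recognition once the adjunction is brought in.
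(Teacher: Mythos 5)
Your proposal is correct and follows essentially the same route as the paper: part (1) is the same pullback argument via naturality of $\eta$ and $\b(P)=0$, part (2) is the paper's reduction to the $\B$-trivial kernel-pair projection $f_{0}$ (your diagonal chase is just the concrete form of the paper's remark that $\hom(P,-)$ preserves kernel pairs), and your converse---testing on $X\to 0$ for $X\in\B$ and invoking the adjunction to get $\mathrm{id}_{\b(P)}=0$---is only a cosmetic variant of the paper's direct test of $\eta_{P}$ against $0$ on the trivial extension $\b(P)\to 0$.
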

\begin{proof}
The extension $f$ being $\B$-trivial means that the square~\eqref{Birkhoff-Square} is a pullback. If $b_{0}$, $b_{1}\colon{P\to B}$ are morphisms such that $f\comp b_{0}=f\comp b_{1}$, then $b_{0}$ is equal to $b_{1}$ by the uniqueness in the universal property of this pullback: indeed also $\eta_{B}\comp b_{0}=\b(b_{0})\comp\eta_{P}=0=\b(b_{1})\comp\eta_{P}=\eta_{B}\comp b_{1}$. Thus we see that $\hom(P,f)$ is injective.
This map is also surjective, since any morphism $a\colon{P\to A}$ is such that $\eta_{A}\comp a=\b(a)\comp \eta_{P}=0=\b(f)\comp 0$ and thus induces a morphism $b\colon{P\to B}$ for which $f\comp b=a$.

Statement 2 follows from 1 because the functor $\hom(P,-)$ preserves kernel pairs, and a map is an injection if and only if its kernel pair projections are bijections.

As to the converse: the morphism $!_{\b(P)}\colon{\b(P)\to 0}$ is a $\B$-trivial extension; since $!_{\b(P)}\comp \eta_{P}=0=!_{\b(P)}\comp 0$, the assumption implies that $\eta_{P}$ is zero, which means that $P$ is $\B$-perfect.
\end{proof}

\subsection{Universal central extensions}
For any object $A$ of $\A$, let $\Centr_{\B}(A)$ or $\Centr_{I}(A)$ denote the category of all $\B$-central extensions of $A$: the full subcategory of the slice category $(\A\downarrow A)$ determined by the central extensions. A (weakly) initial object of this category $\Centr_{\B}(A)$ is called a \defn{(weakly) universal central extension} of~$A$. A $\B$-central extension $u\colon{U\to A}$ is weakly universal when for every $\B$-central extension $f\colon{B\to A}$ there exists a morphism $\overline{f}$ from $u$ to $f$, that is, such that $f\comp \overline{f}=u$. Furthermore, $u$ is universal when this induced morphism $\overline{f}$ is unique. Note also that, up to isomorphism, an object admits at most one universal $\B$-central extension.

\begin{lemma}\label{UCE-then-Perfect}
If $u\colon{U\to A}$ is a universal $\B$-central extension then the objects $U$ and~$A$ are $\B$-perfect.
\end{lemma}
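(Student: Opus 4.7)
The plan is to establish that $U$ is $\B$-perfect. The $\B$-perfectness of $A$ will then follow at once from the observation at the start of the subsection on perfect objects, since $A$ is a regular quotient of $U$ via the extension~$u$.

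To show $U$ is $\B$-perfect, the idea is to exhibit a $\B$-central extension over $A$ which admits two \emph{a priori} distinct lifts of $u$ and which must therefore coincide by uniqueness in the universal property. The natural candidate is
\[
\psi = u\comp \pi_{U}\colon U \times \b(U) \To A,
\]
where $\pi_{U}$ is the first projection. Both $\langle 1_{U},0\rangle$ and $\langle 1_{U},\eta_{U}\rangle$ from $U$ to $U \times \b(U)$ satisfy $\psi\comp(-)=u$, so once $\psi$ is $\B$-central, uniqueness in the universal property of $u$ forces them to coincide; the resulting equality $\eta_{U}=0$ is precisely $\B$-perfectness of $U$.

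The crux of the argument is to verify that $\psi$ is $\B$-central. I would pick an extension $g\colon C \to A$ along which $u$ becomes $\B$-trivial and identify the pullback of $\psi$ along~$g$ with the composite
\[
(C\times_{A} U)\times \b(U) \xrightarrow{\pi} C\times_{A} U \xrightarrow{g^{*}u} C.
\]
Its right factor is $\B$-trivial by the choice of $g$, and its left factor $\pi$ is a product projection whose kernel $\b(U)$ lies in~$\B$. Presenting $\pi$ as the pullback of $\b(U)\to 0$ along $(C\times_{A}U)\to 0$ shows that $\pi$ is itself $\B$-trivial, and the composite of two trivial extensions is trivial since the Birkhoff square of a composite is the vertical pasting of the two Birkhoff squares, and pullback squares paste to a pullback. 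Hence $g^{*}\psi$ is $\B$-trivial, so $\psi$ is $\B$-central.

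The main obstacle is thus the pullback-stability of trivial extensions used when analysing $\pi$: a standard fact in the Galois-theoretic framework of~\cite{Janelidze-Kelly}, straightforward here but the single point where a reader might legitimately pause. Once that is in place, what remains is a clean diagram chase together with the universal property of~$u$.
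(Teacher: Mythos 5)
Your proof is correct, and it runs on the same underlying trick as the paper's: play two lifts of $u$, built from $0$ and from $\eta_{U}\colon{U\to \b(U)}$, against the uniqueness in the universal property, forcing $\eta_{U}=0$ and hence $\b(U)=0$ (note $\eta_{U}$ is a regular epimorphism), after which $A$ is $\B$-perfect as a regular quotient of $U$. The difference is the choice of comparison object. The paper uses the projection $\pr_{A}\colon{A\times \b(U)\to A}$ with the two lifts $\langle u,0\rangle$ and $\langle u,\eta_{U}\rangle$; since $\pr_{A}$ is the pullback of the trivial extension $\b(U)\to 0$ along $A\to 0$, it is trivial, hence central, and the proof ends in three lines. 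You instead use $u\comp\pr_{U}\colon{U\times \b(U)\to A}$, whose centrality is not free---composites of central extensions are exactly what may fail, which is the whole point of \UCEspecial---so you must prove it, and your argument does so correctly: the pullback of $u\comp\pr_{U}$ along an extension $g$ trivialising $u$ is, by pasting of pullback squares, the composite of the projection $(C\times_{A}U)\times\b(U)\to C\times_{A}U$ with $g^{*}(u)$; the projection is trivial as a pullback of $\b(U)\to 0$ along the extension $(C\times_{A}U)\to 0$, and composites of trivial extensions are trivial because their Birkhoff squares paste. The pullback-stability of trivial extensions along extensions that you flag is indeed the one Galois-theoretic input, guaranteed by admissibility of Birkhoff subcategories of semi-abelian categories~\cite{Janelidze-Kelly}---and it is the very same fact the paper uses silently when asserting that $\pr_{A}$ is trivial. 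So your route is sound but pays for a centrality verification that the paper's choice of $A\times\b(U)$ avoids entirely; it buys nothing extra, since the central extension you construct plays no further role.
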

\begin{proof}
Since the first projection $\pr_{A}\colon {A\times \b(U)\to A}$ is a trivial extension, by~\ref{Subsection-Central-Extensions} it is central. By the hypothesis that $u$ is universal, there exists just one morphism $\langle u,v\rangle\colon {U\to A\times \b(U)}$ such that $\pr_{A}\comp\langle u,v\rangle=u$. But then $0\colon {U\to \b(U)}$ is equal to $\eta_{U}\colon {U\to \b(U)}$, and $\b(U)=0$. Since a regular quotient of a perfect object is perfect, this implies that both $U$ and $A$ are $\B$-perfect.
\end{proof}

\begin{proposition}\label{Proposition-Universal-vs-Weak-Universal}
Let $\A$ be a semi-abelian category and~$\B$ a Birkhoff subcategory of $\A$. Let $u\colon{U\to A}$ be a $\B$-central extension. Between the following conditions, the implications $1\Leftrightarrow 2 \Leftrightarrow 3\Rightarrow 4\Leftrightarrow 5$ hold:
\begin{enumerate}
\item $U$ is $\B$-perfect and every $\B$-central extension of $U$ splits;
\item $U$ is $\B$-perfect and projective with respect to all $\B$-central extensions;
\item for every $\B$-central extension $f\colon{B\to A}$, the map
\[
\hom(U,f)\colon{\hom(U,B)\to \hom(U,A)}
\]
is a bijection; 
\item $U$ is $\B$-perfect and $u$ is a weakly universal $\B$-central extension;
\item $u$ is a universal $\B$-central extension.
\end{enumerate}
\end{proposition}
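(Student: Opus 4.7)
My plan is to close the cycle $1 \Rightarrow 2 \Rightarrow 3 \Rightarrow 1$ and separately establish $3 \Rightarrow 4 \Leftrightarrow 5$, using only pullback-stability of $\B$-central extensions together with Lemmas \ref{Lemma-Perfect-and-Central} and \ref{UCE-then-Perfect}.

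For the straightforward implications: $1 \Rightarrow 2$ is handled by pulling an arbitrary $\B$-central extension $g\colon C \to D$ back along a map $a\colon U \to D$, splitting the resulting central extension $a^{*}(g)\colon U \times_{D} C \to U$ by hypothesis~1, and composing the section with the projection to $C$ to produce the desired lift of $a$. Conversely, $2 \Rightarrow 1$ is obtained by lifting $1_{U}$ through an arbitrary $\B$-central extension of $U$. For $2 \Rightarrow 3$, surjectivity of $\hom(U,f)$ is projectivity, and injectivity is Lemma \ref{Lemma-Perfect-and-Central}(2). The equivalence $4 \Leftrightarrow 5$ is analogous: $5 \Rightarrow 4$ combines triviality of ``weakly universal'' with Lemma \ref{UCE-then-Perfect}, while $4 \Rightarrow 5$ upgrades weak universality to universality using injectivity of $\hom(U,f)$ from Lemma \ref{Lemma-Perfect-and-Central}(2). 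Finally, for $3 \Rightarrow 4$: $\B$-perfectness of $U$ follows by applying the bijection in condition~3 to the trivial (hence $\B$-central) projection $\pr_{A}\colon A \times \b(U) \to A$, since both $\langle u, \eta_{U}\rangle$ and $\langle u, 0\rangle$ compose with $\pr_{A}$ to $u$; hence $\eta_{U}=0$, and so $\b(U)=0$ because $\eta_{U}$ is a regular epimorphism. Weak universality of $u$ is then the surjectivity half of the bijection applied to $u$ itself.

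The main obstacle is the remaining direction $3 \Rightarrow 1$: that every $\B$-central extension $v\colon V \to U$ must split. My first instinct is to transport the problem from $U$ to $A$ by considering the composite $u \comp v\colon V \to A$. Should this be $\B$-central, condition~3 would supply a unique $s\colon U \to V$ with $u \comp v \comp s = u$, and injectivity of $\hom(U, u)$ (Lemma \ref{Lemma-Perfect-and-Central}(2), using the $\B$-perfectness of $U$ just established) would force $v \comp s = 1_{U}$. The delicate point is exactly that composites of $\B$-central extensions need not themselves be $\B$-central in a general semi-abelian category---this is precisely the phenomenon the \UCE\ of the introduction was designed to address. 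I therefore anticipate that the actual argument either (a) uses the full strength of condition~3 to force centrality of $u \comp v$ in this specific situation, perhaps through a pullback analysis involving $u^{*}(u \comp v)$ and the already trivial kernel pair of $u$, or (b) replaces $u \comp v$ by its centralisation over $A$ and pushes the universal property of $u$ through the resulting comparison map. All other implications reduce to routine diagram-chasing.
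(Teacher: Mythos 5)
There is a genuine gap: the implication $3\Rightarrow 1$ (equivalently $3\Rightarrow 2$), which you explicitly leave open, and neither of your two fallback strategies closes it as stated. The difficulty comes from your reading of condition~3 as quantifying only over $\B$-central extensions with the fixed codomain $A$. The paper's proof (and its later use) requires the other reading: $f$ ranges over \emph{all} $\B$-central extensions, so that, like conditions 1 and 2, condition 3 is a property of the object $U$ alone; the reuse of the letter $A$ is unfortunate, but this reading is what makes the claim ``2 and 3 are equivalent by Lemma~\ref{Lemma-Perfect-and-Central}'' work, and it is how universality is later treated as depending only on the domain (Propositions \ref{Proposition-Universal-vs-Weak-Universal-(UCE)} and \ref{Proposition-UCE-domain}). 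With that reading everything is short: surjectivity of $\hom(U,f)$ for all central $f$ is exactly projectivity, injectivity for the (central) trivial extensions gives $\B$-perfectness by the last statement of Lemma~\ref{Lemma-Perfect-and-Central}, and $3\Rightarrow 1$ is a one-liner---for a central $v\colon V\to U$, apply surjectivity of $\hom(U,v)$ to $1_{U}$ to get a splitting. No composite of central extensions ever has to be formed, which is the whole point of stating the proposition without \UCE.

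Under your fixed-codomain reading, the missing implication runs precisely into the obstruction that \UCE{} was introduced for, and your two plans do not avoid it: in plan (a), $u^{*}(u\comp v)$ decomposes as a pullback of the central $v$ followed by a kernel-pair projection of $u$ (a trivial extension), and such a composite need not be central in general---the analogous step in the proof of Proposition~\ref{Proposition-UCE-necessary} only goes through because there the inner extension is already \emph{split}, hence trivial, and trivial extensions compose; Example~\ref{Counter1} shows that without such extra input centrality of composites genuinely fails. In plan (b), replacing $u\comp v$ by its centralisation over $A$ quotients out $[\Ker(u\comp v),V]_{\B}$ and thereby discards exactly the data needed to produce a section of $v$ itself. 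So the cycle $1\Rightarrow 2\Rightarrow 3\Rightarrow 1$ is not closed in your proposal. The remaining implications you give ($1\Leftrightarrow 2$, $2\Rightarrow 3$, $3\Rightarrow 4$ via the projection $\pr_{A}\colon A\times\b(U)\to A$, and $4\Leftrightarrow 5$ via Lemmas \ref{Lemma-Perfect-and-Central} and \ref{UCE-then-Perfect}) are correct and essentially the paper's own arguments.
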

\begin{proof}
Suppose that 1 holds. To prove 2, let $f\colon{B\to A}$ be a $\B$-central extension and $g\colon{U\to A}$ a morphism. Then the pullback $g^{*}(f)\colon{\overline{B}\to U}$ of $f$ along $g$ is still $\B$-central; hence $g^{*}(f)$ admits a splitting $s\colon{U\to \overline{B}}$, and $(f^{*}(g))\comp s$ is the required morphism ${g\to f}$. Conversely, given a $\B$-central extension $f\colon{B\to U}$, the projectivity of~$U$ yields a morphism $s\colon{U\to B}$ such that $f\comp s=1_{U}$. 

Conditions 2 and 3 are equivalent by Lemma~\ref{Lemma-Perfect-and-Central}.

Condition 3 implies condition 5: given a $\B$-central extension $f\colon{B\to A}$ of $A$, there exists a unique morphism $\overline{f}\colon{U\to B}$ that satisfies $f\comp \overline{f}=u$.

Finally, 4 and 5 are equivalent by Lemma~\ref{Lemma-Perfect-and-Central} and Lemma~\ref{UCE-then-Perfect}.
\end{proof}

\begin{remark}
To prove that condition 4 implies 3 we would require $U$ itself to admit a universal $\B$-central extension, which need not be the case in the present context. In fact, even if such a universal $\B$-central extension of $U$ does exist, then the above five conditions may or may not be equivalent: see Section~\ref{Section-UCE}.
\end{remark}

\section{Constructing universal central extensions}\label{Section-UCE-Construction}
Our aim is now to prove that every perfect object admits a universal central extension. To do so, we need to assume that the surrounding category has enough projectives: they will give us weakly universal central extensions.

\subsection{Commutators and centralisation}\label{Subsection-Commutators-and-Centralisation}
The kernel $\mu$ of the unit $\eta$ of $\b\colon{\A\to \B}$ gives rise to a ``zero-dimensional'' commutator: for any object~$A$ of~$\A$, the bottom row in~\eqref{Birkhoff} is a short exact sequence in $\A$; hence $A$ is an object of $\B$ if and only if $[A,A]_{\B}=0$. On the other hand, an object~$A$ of~$\A$ is $\B$-perfect precisely when $[A,A]_{\B}=A$. This construction defines a functor $[-,-]_{\B}\colon{\A\to \A}$ and a natural transformation~$\mu\colon{[-,-]_{\B}\To 1_{\A}}$. The functor $[-,-]_{\B}$ preserves regular epimorphisms; we recall the argument. Given a regular epimorphism $f\colon{B\to A}$, by the Birkhoff property, the induced square of regular epimorphisms on the right
\begin{equation}\label{Birkhoff}
\vcenter{\xymatrix{0 \ar[r] & [B,B]_{\B} \ar@{{ |>}->}[r]^-{\mu_{B}} \ar@{.>}[d]_-{[f,f]_{\B}} & B \ar@{ >>}[d]_-{f} \ar@{ >>}[r]^-{\eta_{B}} & \b(B) \ar@{ >>}[d]^-{\b(f)} \ar[r] & 0\\
0 \ar[r] & [A,A]_{\B} \ar@{{ |>}->}[r]_-{\mu_{A}} & A \ar@{ >>}[r]_-{\eta_{A}} & \b(A) \ar[r] & 0}}
\end{equation}
is a pushout---which is equivalent to $[f,f]_{\B}$ being regular epic~\cite[Corollary 5.7]{EverVdL1}.

Lemma~\ref{Lemma-Pullback} implies that an extension $f\colon{B\to A}$ is $\B$-central if and only if either one of the morphisms $[f_{0},f_{0}]_{\B}$, $[f_{1},f_{1}]_{\B}$ is an isomorphism, which, because they have a common splitting, happens exactly when they coincide, $[f_{0},f_{0}]_{\B}=[f_{1},f_{1}]_{\B}$. Hence the kernel $[K,B]_{\B}$ of $[f_{0},f_{0}]_{\B}$ measures how far $f$ is from being central: indeed, $f$ is $\B$-central if and only if $[K,B]_{\B}$ is zero.
\[
\xymatrix{& [K,B]_{\B} \ar@{{ |>}->}[d]_-{\ker [f_{0},f_{0}]_{\B}} \ar@{{ |>}.>}[ddr] \\
0 \ar[r] & [B\times_A B,B\times_A B]_{\B} \ar@{{ |>}->}[r]^-{\mu_{B\times_A B}} \ar@<-.5ex>@{ >>}[d]_-{[f_{0},f_{0}]_{\B}} \ar@<.5ex>@{ >>}[d]^-{[f_{1},f_{1}]_{\B}} & B\times_A B \ar@<-.5ex>@{ >>}[d]_-{f_{0}} \ar@<.5ex>@{ >>}[d]^-{f_{1}} \ar@{ >>}[r]^-{\eta_{B\times_A B}} & \b(B\times_A B) \ar@<-.5ex>@{ >>}[d]_-{\b(f)_{0}} \ar@<.5ex>@{ >>}[d]^-{\b(f)_{1}} \ar[r] & 0\\
0 \ar[r] & [B,B]_{\B} \ar@{{ |>}->}[r]_-{\mu_{B}} & B \ar@{ >>}[r]_-{\eta_{B}} & \b(B) \ar[r] & 0}
\]

\begin{remark}\label{Remark-Subobject}
This explains, for instance, why a sub-extension of a central extension is central. It is worth recalling here that a morphism of extensions $(b,a)$ 
\begin{equation*}\label{Morphisms}
\vcenter{\xymatrix{B' \ar@{-{ >>}}[d]_-{f'} \ar[r]^-{b} & B \ar@{-{ >>}}[d]^-{f}\\
A' \ar[r]_-{a} & A}}
\end{equation*}
is a monomorphism if and only if $b$ is.
\end{remark}

The ``one-dimensional'' commutator $[K,B]_{\B}$ may be considered as a normal subobject of $B$ via the composite $\mu_{B}\comp [f_{1},f_{1}]_{\B}\comp \ker [f_{0},f_{0}]_{\B}\colon{[K,B]_{\B}\to B}$ (see the diagram above). Thus we obtain the left adjoint $\b_1\colon \Ext(\A)\to \CExt_{\B}(\A)$, where $\CExt_{\B}(\A)$ is the full reflective subcategory of $\Ext(\A)$ determined by the $\B$-central extensions. Given an extension $f\colon{B\to A}$ with kernel $K$, its \defn{centralisation} $\b_{1}(f)\colon{B/[K,B]_{\B}\to A}$ is obtained through the diagram with exact rows
\[
\xymatrix{0 \ar[r] & [K,B]_{\B} \ar@{{ |>}->}[r] \ar@{ >>}[d] & B \ar@{ >>}[r] \ar@{ >>}[d]_-{f} & \tfrac{B}{[K,B]_{\B}} \ar[r] \ar@{ >>}[d]^-{\b_{1}(f)} & 0\\
& 0 \ar@{{ |>}->}[r] & A \ar@{=}[r] & A \ar[r] & 0.}
\]

\subsection{Existence of a weakly universal central extension}\label{Subsection-Enough-Projectives}
We say that $\A$ \defn{has weakly universal central extensions} (for some Birkhoff subcategory $\B$ of $\A$) when every object of $\A$ admits a weakly universal $\B$-central extension. This happens, for instance, when $\A$ has enough (regular) projectives, so that for any object~$A$ of~$\A$, there exists a regular epimorphism $f\colon{B\to A}$ with $B$ projective, a \defn{(projective) presentation} of $A$.

\begin{lemma}\label{Lemma-Weakly-UCE}
If the category $\A$ is semi-abelian with enough projectives then it has weakly universal central extensions for any Birkhoff subcategory $\B$.
\end{lemma}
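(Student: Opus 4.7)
The plan is to build the weakly universal $\B$-central extension of an arbitrary object $A$ by first taking a projective presentation and then applying the centralisation reflector $\b_{1}$ introduced in \ref{Subsection-Commutators-and-Centralisation}.

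More precisely, I would start by choosing, using the assumption of enough projectives, a presentation $p\colon{P\to A}$ with $P$ projective, and let $K=\Ker(p)$. The candidate for a weakly universal $\B$-central extension of $A$ is then
\[
u = \b_{1}(p)\colon U=P/[K,P]_{\B}\to A,
\]
which is $\B$-central by construction of $\b_{1}$. To check weak universality, let $f\colon{B\to A}$ be any $\B$-central extension. Because $P$ is projective and $f$ is a regular epimorphism, there exists a morphism $g\colon{P\to B}$ with $f\comp g=p$; in other words, the pair $(g,1_{A})$ is a morphism of extensions $p\to f$ in $\Ext(\A)$.

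Now the key step: since $f$ is already $\B$-central, it lies in the reflective subcategory $\CExt_{\B}(\A)$ of $\Ext(\A)$, and $\b_{1}$ is the reflector. Hence the morphism $(g,1_{A})\colon p\to f$ factors through the unit $p\to\b_{1}(p)$ of this reflection, yielding a morphism $(\overline{f},1_{A})\colon\b_{1}(p)\to f$ in $\Ext(\A)$. The first component $\overline{f}\colon{U\to B}$ satisfies $f\comp\overline{f}=u$, which is exactly what weak universality requires.

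The only subtle point is why we obtain weak universality rather than universality. The non-uniqueness comes from the lifting step: projectivity of $P$ provides $g\colon{P\to B}$ with $f\comp g=p$, but $g$ is not determined by $p$ and $f$, so different choices of $g$ can produce different morphisms $\overline{f}$. I do not expect any serious obstacle in the argument: the existence of centralisation from \ref{Subsection-Commutators-and-Centralisation} and projectivity of $P$ are doing all the work, and both have already been set up.
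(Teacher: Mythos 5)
Your proposal is correct and follows exactly the paper's argument: centralise a projective presentation $p\colon P\to A$ to get $\b_{1}(p)$, then use projectivity of $P$ to lift along any $\B$-central $f\colon B\to A$ and the universal property of the reflection $\b_{1}\colon\Ext(\A)\to\CExt_{\B}(\A)$ to factor through the unit, obtaining the comparison morphism over $A$. The only difference is that you spell out the factorisation step (and the reason only weak universality is obtained) which the paper leaves implicit.
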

\begin{proof}
Given an object $A$ of $\A$, the category $\Centr_{\B}(A)$ has a weakly initial object: given a projective presentation $f\colon{B\to A}$ with kernel $K$, its centralisation $\b_{1}(f)\colon{B/[K,B]_{\B}\to A}$ is weakly initial. Indeed, any $\B$-central extension $g\colon {C\to A}$ induces a morphism ${\b_{1}(f)\to g}$ in $\Centr_{\B}(A)$, because the object $B$ is projective.
\end{proof}

\subsection{Baer invariants}\label{Subsection-Baer-Invariants}\label{Existence of universal central extension}
Let $A$ be an object of $\A$ and $f\colon{B\to A}$ a projective presentation with kernel~$K$. The induced objects
\[
\frac{[B,B]_{\B}}{[K,B]_{\B}}\qquad\qquad \text{and} \qquad\qquad \frac{K\meet [B,B]_{\B}}{[K,B]_{\B}}
\]
are independent of the chosen projective presentation of $A$ as explained for instance in~\cite{EverVdL1}. The object ${(K\meet [B,B]_{\B})}/{[K,B]_{\B}}$ is called (the Hopf formula for) the \defn{second homology object} of $A$ \defn{(with coefficients in $\B$)} and is written $\H_{2}(A,\b)$. We write $\U(A,\b)$ for the object ${[B,B]_{\B}}/{[K,B]_{\B}}$ and $\H_{1}(A,\b)$ for $\b(A)$.

The objects $\H_{2}(A,\b)$ and $\H_{1}(A,\b)$ are genuine homology objects: if~$\A$ is a semi-abelian monadic category then they may be computed using comonadic homology as in~\cite{EverVdL2}, and in any case, they fit into the homology theory worked out in~\cite{EverHopf}.

The Baer invariants from~\ref{Subsection-Baer-Invariants} may also be considered for all weakly universal $\B$-central extensions of an object~$A$. Since, for any weakly universal $\B$-central extension $f\colon{B\to A}$ with kernel $K$, the commutator $[K,B]_{\B}$ is zero, this implies that the objects
\[
[B,B]_{\B}\qquad\qquad \text{and} \qquad\qquad K\meet [B,B]_{\B}
\]
are independent of the chosen weakly universal central extension of $A$. (Here, as in~\cite{Janelidze:Hopf}, the Hopf formula becomes $\H_{2}(A,\b)=K\meet [B,B]_{\B}$. Also note that $\U(A,\b)=[B,B]_{\B}$.)

\subsection{The perfect subobject}\label{Perfect-Subobject}
When there are weakly universal central extensions, any central extension of a perfect object contains a subobject with a perfect domain. We prove this in two steps: first for weakly universal central extensions, then in general. This implies that any perfect object admits a universal central extension when weakly universal central extensions exist---a general version of Proposition~4.1 in~\cite{Gran-VdL}.

\begin{lemma}\label{Lemma-Perfect-Subobject}
Suppose $\A$ is a semi-abelian category with a Birkhoff subcategory~$\B$.
Then any weakly universal $\B$-central extension of a $\B$-perfect object contains a subobject with a $\B$-perfect domain.
\end{lemma}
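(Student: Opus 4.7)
The plan is to produce a $\B$-perfect subobject of $U$ by constructing it from a projective-presentation realisation of the Baer invariant $\U(A,\b)$ and then transporting it into $U$ via weak universality. Fix a projective presentation $p\colon P \to A$ with kernel $K_0$; then $u_0 := \b_1(p)\colon U_0 := P/[K_0,P]_\B \to A$ is a weakly universal $\B$-central extension (by the construction in the proof of Lemma~\ref{Lemma-Weakly-UCE}), and its subobject $V_0 := [U_0,U_0]_\B = [P,P]_\B / [K_0,P]_\B$ realises $\U(A,\b)$ inside $U_0$.

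The key claim is that $V_0$ is $\B$-perfect. Since $\b$ is right-exact, $\b(A)=0$ translates into $K_0 \vee [P,P]_\B = P$ as normal subobjects of $P$. Applying the semi-abelian commutator identity
\[
[X \vee Y, X \vee Y]_\B \subseteq [X,X]_\B \vee [X,Y]_\B \vee [Y,Y]_\B
\]
with $X = K_0$, $Y = [P,P]_\B$, and using the absorption $[K_0,K_0]_\B, [K_0,[P,P]_\B]_\B \subseteq [K_0,P]_\B$, one obtains
\[
[P,P]_\B \subseteq [K_0,P]_\B \vee [[P,P]_\B,[P,P]_\B]_\B.
\]
Quotienting by $[K_0,P]_\B$ inside $[P,P]_\B$ gives $V_0 = [V_0,V_0]_\B$, and hence $\b(V_0)=0$, i.e.\ $V_0$ is $\B$-perfect.

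Now by weak universality of $u$ applied to $u_0$, there is $\phi\colon U_0 \to U$ with $u\circ\phi = u_0$; by naturality of $\mu$ this restricts to $\phi|_{V_0}\colon V_0 \to [U,U]_\B \hookrightarrow U$. Let $W \hookrightarrow U$ be its regular image. As a regular quotient of the $\B$-perfect object $V_0$, the subobject $W$ is itself $\B$-perfect. Moreover $u|_W\colon W \to A$ is a regular epimorphism: indeed $u_0|_{V_0}\colon V_0 \to A$ is one (by diagram~\eqref{Birkhoff} applied to $u_0$, together with $\mu_A$ being an isomorphism because $A$ is $\B$-perfect), and it factors as $V_0 \twoheadrightarrow W \hookrightarrow U \to A$. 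Hence $W$ is the promised $\B$-perfect sub-extension of $u$. The main obstacle is the commutator identity invoked in the middle paragraph: for a general Birkhoff subcategory of a semi-abelian category this requires the technical results on Birkhoff commutators from \cite{EverVdL1}, and their proper adaptation---in particular the interaction of $[-,-]_\B$ with joins and with the one-dimensional commutator $[K_0,P]_\B$ introduced in~\ref{Subsection-Commutators-and-Centralisation}---is where the real work lies.
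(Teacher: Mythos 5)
Your argument stands or falls on the middle paragraph, and that is exactly where it is not proved. The inclusion $[P,P]_{\B}\subseteq [K_0,P]_{\B}\vee [[P,P]_{\B},[P,P]_{\B}]_{\B}$ is not a stepping stone but is equivalent to the statement you are trying to establish (the $\B$-perfectness of $\U(A,\b)$ computed from the presentation), and you derive it from a binary relative commutator $[X,Y]_{\B}$ of arbitrary subobjects together with the join-decomposition identity $[X\vee Y,X\vee Y]_{\B}\subseteq [X,X]_{\B}\vee[X,Y]_{\B}\vee[Y,Y]_{\B}$. Neither of these is available here: the framework of~\ref{Subsection-Commutators-and-Centralisation} and \cite{EverVdL1} only provides the zero- and one-dimensional commutators $[B,B]_{\B}$ and $[K,B]_{\B}$, and for a general Birkhoff subcategory of a semi-abelian category such join decompositions (even in the absolute case, where joins of subobjects produce higher-order correction terms) are precisely the kind of commutator identity that cannot be taken for granted---this is the same circle of problems that makes \UCE\ fail in general. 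Your closing remark that the adaptation of these results ``is where the real work lies'' concedes the point: the heart of the proof is deferred to an unproved commutator calculus. A secondary defect is that by starting from a projective presentation $p\colon P\to A$ you import an enough-projectives hypothesis which the lemma does not assume; the statement is about an arbitrary weakly universal $\B$-central extension in any semi-abelian category with a Birkhoff subcategory, and the transport of $V_0$ into $U$ via weak universality only becomes necessary because you refuse to work with $U$ itself.

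The paper's proof needs none of this machinery. Given the weakly universal $\B$-central extension $f\colon B\to A$ with $A$ $\B$-perfect, $\mu_A$ is an isomorphism and $[f,f]_{\B}$ is a regular epimorphism, so $f\comp\mu_B\colon [B,B]_{\B}\to A$ is again an extension; it is $\B$-central as a subobject of $f$ (Remark~\ref{Remark-Subobject}) and weakly universal, since any morphism $B\to C$ over $A$ restricts along $\mu_B$. Now the independence of the Baer invariant $\U(A,\b)$ of the chosen weakly universal central extension (Subsection~\ref{Existence of universal central extension}) applied to the two weakly universal central extensions $f$ and $f\comp\mu_B$ yields $[B,B]_{\B}\cong [[B,B]_{\B},[B,B]_{\B}]_{\B}$, i.e.\ $[B,B]_{\B}$ is $\B$-perfect. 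If you want to repair your write-up, replace the commutator computation by this Baer-invariant comparison (applied, if you like, to $u_0$ and its restriction $u_0\comp\mu_{U_0}$); at that point the projective presentation and the passage through $W\subseteq U$ become superfluous, since the argument applies directly to the given extension $u$.
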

\begin{proof}
Let $f\colon{B\to A}$ be a weakly universal $\B$-central extension of a $\B$-perfect object~$A$. Since $\mu_{A}$ is an isomorphism and $[f,f]_{\B}$ is a regular epimorphism, the morphism $f\comp\mu_{B}=\mu_{A}\comp [f,f]_{\B}$ in the induced diagram with exact rows
\[
\xymatrix{
0 \ar[r] & K\meet [B,B]_{\B} \pullback \ar@{{ |>}->}[r] \ar@{{ |>}->}[d] & [B,B]_{\B} \ar@{{ |>}->}[d]^-{\mu_{B}} \ar@{-{ >>}}[r]^-{f\circ \mu_{B}} & A \ar[r] \ar@{=}[d] & 0\\
0\ar[r] & {K} \ar@{{ |>}->}[r] & {B} \ar@{ >>}[r]_-{f} & A \ar[r] & 0}
\]
is also a regular epimorphism. The extension $f\comp \mu_{B}$ is $\B$-central as a subobject of the $\B$-central extension $f$; its weak universality is clear. By~\ref{Existence of universal central extension} the object $[B,B]_{\B}$ is $\B$-perfect, because the $\B$-central extensions $f\comp \mu_{B}$ and $f$ are both weakly universal, so that $[B,B]_{\B}\cong[[B,B]_{\B},[B,B]_{\B}]_{\B}$.
\end{proof}

\begin{lemma}\label{Lemma-Commutator-Perfect}
Let $\A$ be a semi-abelian category with weakly universal central extensions for a Birkhoff subcategory $\B$ of $\A$. If $f\colon {B\to A}$ is a $\B$-central extension of a~$\B$-perfect object~$A$, then $[B,B]_{\B}$ is also $\B$-perfect.
\end{lemma}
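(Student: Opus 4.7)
My plan is a commutator calculation inside~$B$, combined with the structural fact that $A$ being $\B$-perfect forces $\Ker(f)$ to cover $B$ together with the commutator $[B,B]_{\B}$.

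First I would show that $B=[B,B]_{\B}\join K$ holds in the lattice of normal subobjects of~$B$, where $K=\Ker(f)$. The composite $\eta_{A}\comp f\colon B\to \b(A)$ vanishes because $A$ is $\B$-perfect, so its kernel is all of $B$; both $K$ and $[B,B]_{\B}=\Ker(\eta_{B})$ are contained in this kernel. Conversely, the quotient $B/(K\join [B,B]_{\B})$ is a common regular quotient of $B/K=A$ and of $B/[B,B]_{\B}=\b(B)$ which admits a further regular epimorphism down to $\b(A)=0$, and therefore vanishes.

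Next I would combine this with the centrality of $f$, which by Subsection~\ref{Subsection-Commutators-and-Centralisation} says $[K,B]_{\B}=0$. By monotonicity of the commutator of normal subobjects in each argument, the three terms $[K,K]_{\B}$, $[K,[B,B]_{\B}]_{\B}$ and $[[B,B]_{\B},K]_{\B}$ are all contained in $[K,B]_{\B}$, hence zero. Bilinearity then yields
\[
[B,B]_{\B}=[[B,B]_{\B}\join K,\,[B,B]_{\B}\join K]_{\B}=[[B,B]_{\B},[B,B]_{\B}]_{\B},
\]
which is exactly the assertion that $[B,B]_{\B}$ is $\B$-perfect.

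The main technical hurdle is the identification underlying this calculation: the paper's $[K,B]_{\B}$ is defined via the kernel pair of the extension~$f$, whereas the argument above treats it as the binary commutator of the two normal subobjects $K$ and $B$ of $B$, and freely uses bilinearity and monotonicity of that binary commutator. These properties are part of the general semi-abelian commutator calculus (Pedicchio, Everaert--Van der Linden), the natural home for the weakly universal central extensions hypothesis; once those tools are in place, the calculation above is essentially immediate. If instead one wishes to avoid this machinery, one can attempt to reduce to Lemma~\ref{Lemma-Perfect-Subobject} by lifting a weakly universal $u\colon U\to A$ (with $U$ $\B$-perfect, obtained from that lemma) through $f$ to obtain $h'\colon U\to[B,B]_{\B}$ and then argue that the image of $h'$ exhausts $[B,B]_{\B}$; but verifying this exhaustion seems to require exactly the same centrality input, so the commutator route is the more transparent one.
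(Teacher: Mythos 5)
There is a genuine gap, and it sits exactly where you locate the ``main technical hurdle'': the commutator calculus you invoke is not available at the level of generality of this lemma. In the paper's framework $[B,B]_{\B}$ is the kernel of $\eta_{B}$ (a unary, object-level construction) and $[K,B]_{\B}$ is defined via the kernel pair of the particular extension $f$ (Subsection~\ref{Subsection-Commutators-and-Centralisation}); there is no binary operation $[X,Y]_{\B}$ on arbitrary normal subobjects of $B$ in play, let alone one that is monotone in each variable, distributes over joins, restricts to the two constructions above, and for which ``$[X,X]_{\B}$ computed inside $B$'' agrees with $\Ker(\eta_{X})$ (which is what $\B$-perfectness of $X=[B,B]_{\B}$ actually means). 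Even in the absolute case $\B=\Ab(\A)$, where the Huq/Higgins commutator does give such a binary operation, the ``bilinearity'' $[X\join K,X\join K]=[X,X]\join[X,K]\join[K,K]$ fails in a general semi-abelian category: the correct formula carries an extra ternary (co-smash) term, whose vanishing requires additional hypotheses (of algebraic-coherence type) that are not assumed here; and for a general Birkhoff subcategory $\B$ the identification of the paper's kernel-pair commutator $[K,B]_{\B}$ with any binary commutator with these properties is itself a nontrivial matter, not part of this paper's toolkit. So the displayed identity $[B,B]_{\B}=[[B,B]_{\B},[B,B]_{\B}]_{\B}$ is unjustified as it stands. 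A telling symptom is that your argument never uses the hypothesis that $\A$ has weakly universal central extensions, whereas the paper's proof depends on it essentially.

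For comparison, the paper argues as follows: take a weakly universal $\B$-central extension $v\colon V\to B$, centralise $f\comp v$ to obtain a weakly universal $\B$-central extension $w\colon W\to A$ of the $\B$-perfect object $A$ together with a regular epimorphism $W\to B$ induced by $v$; by~\ref{Existence of universal central extension} and Lemma~\ref{Lemma-Perfect-Subobject} the object $[W,W]_{\B}$ is $\B$-perfect, the restriction $[W,W]_{\B}\to[B,B]_{\B}$ is a regular epimorphism because $[-,-]_{\B}$ preserves regular epimorphisms, and a regular quotient of a $\B$-perfect object is $\B$-perfect. Your first step ($B=[B,B]_{\B}\join K$ when $A$ is $\B$-perfect) is fine, and your calculation is indeed the classical proof for groups, Lie or Leibniz algebras, and non-associative algebras; but to make it work here you would have to either restrict to categories and Birkhoff subcategories where a join-distributive relative commutator is available, or else follow the paper's route through weakly universal central extensions.
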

\begin{proof}
The object $B$ admits a weakly universal $\B$-central extension $v\colon {V\to B}$; then the centralisation $w\colon{W\to A}$ of the resulting composite $f\comp v$ is a weakly universal $\B$-central extension. Indeed, given any $\B$-central extension $g\colon {C\to A}$, there is a factorisation $\overline{f^{*}(g)}\colon{V\to B\times_A C}$ of $v$ through the pullback $f^{*}(g)\colon{B\times_A C\to B}$ of $g$ along~$f$, and then the composite $(g^{*}(f))\comp (\overline{f^{*}(g)})\colon {V\to C}$ yields the needed morphism ${w\to g}$ by the universal property of the centralisation functor.

The arrow ${W\to B}$ universally induced by $v$ is regular epic, hence so is its restriction ${[W,W]_{\B}\to [B,B]_{\B}}$; but a regular quotient of a perfect object is perfect.
\end{proof}

\begin{theorem}\label{Theorem-Universal-Central-Extension}
Let $\A$ be a semi-abelian category with enough projectives and $\B$ a Birkhoff subcategory of $\A$. An object $A$ of $\A$ is $\B$-perfect if and only if it admits a universal $\B$-central extension. Moreover, this universal $\B$-central extension may be chosen in such a way that it occurs in a short exact sequence
\[
\xymatrix{0 \ar[r] & \H_{2}(A,\b) \ar@{{ |>}->}[r] & \U(A,\b) \ar@{ >>}[r]^-{u^{\b}_{A}} & A \ar[r] & 0.}
\]
\end{theorem}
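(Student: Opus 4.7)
The plan is to prove the two directions separately. The \emph{only if} direction is immediate from Lemma~\ref{UCE-then-Perfect}: both the domain and codomain of a universal $\B$-central extension are $\B$-perfect. All work lies in the converse. For that, assume $A$ is $\B$-perfect and use the enough-projectives hypothesis to pick a projective presentation $f\colon{B\to A}$ with kernel~$K$. As shown inside the proof of Lemma~\ref{Lemma-Weakly-UCE}, the centralisation $\b_{1}(f)\colon{B/[K,B]_{\B}\to A}$ is a weakly universal $\B$-central extension of~$A$. I will refine it to a genuine universal central extension.

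Applying Lemma~\ref{Lemma-Perfect-Subobject} to this weakly universal extension produces a $\B$-central sub-extension $u^{\b}_{A}\colon{\U(A,\b)\to A}$ whose domain is the commutator of $B/[K,B]_{\B}$ with itself, that is, $\U(A,\b)=[B,B]_{\B}/[K,B]_{\B}$, and which is itself $\B$-perfect. Weak universality of this sub-extension is already asserted in Lemma~\ref{Lemma-Perfect-Subobject}; explicitly, any $\B$-central $g\colon{C\to A}$ receives a morphism $h$ from $\b_{1}(f)$ by the weak universality of the centralisation, and the restriction of $h$ along the monomorphism $\U(A,\b)\hookrightarrow B/[K,B]_{\B}$ supplies the required morphism $u^{\b}_{A}\to g$. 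With weak universality and $\B$-perfectness of $\U(A,\b)$ both in hand, condition~4 of Proposition~\ref{Proposition-Universal-vs-Weak-Universal} is satisfied, and the equivalence $4\Leftrightarrow 5$ of that proposition upgrades this to genuine universality of $u^{\b}_{A}$.

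The short exact sequence is then read off the diagram used in the proof of Lemma~\ref{Lemma-Perfect-Subobject}, applied this time to $\b_{1}(f)$ in place of~$f$: the kernel of $u^{\b}_{A}$ is the intersection of the kernel $K/[K,B]_{\B}$ of $\b_{1}(f)$ with its commutator subobject $[B,B]_{\B}/[K,B]_{\B}$, which is $(K\meet[B,B]_{\B})/[K,B]_{\B}=\H_{2}(A,\b)$ in the notation of~\ref{Subsection-Baer-Invariants}. I anticipate no genuine obstacle: the main task is conceptual rather than computational, since the three ingredients---existence of a weakly universal extension via projectives, extraction of a $\B$-perfect sub-extension, and the promotion of weak universality to universality under $\B$-perfectness---have all been prepared in the preceding lemmas and need only be strung together; the only point deserving care is the identification of $\U(A,\b)$ and $\H_{2}(A,\b)$ with the Baer invariants of~\ref{Subsection-Baer-Invariants}, which relies on their independence from the chosen projective presentation.
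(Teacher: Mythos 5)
Your proposal is correct and follows essentially the same route as the paper's own proof: the \emph{only if} direction via Lemma~\ref{UCE-then-Perfect}, and the converse by taking the weakly universal central extension from a projective presentation (Lemma~\ref{Lemma-Weakly-UCE}), extracting the $\B$-perfect sub-extension of Lemma~\ref{Lemma-Perfect-Subobject}, and upgrading it to a universal one through the equivalence $4\Leftrightarrow 5$ of Proposition~\ref{Proposition-Universal-vs-Weak-Universal}, with the short exact sequence read off from the Baer invariants of~\ref{Subsection-Baer-Invariants}. Your explicit identifications of $\U(A,\b)$ and $\H_{2}(A,\b)$ with the Hopf formulae for the chosen presentation merely spell out what the paper leaves implicit.
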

\begin{proof}
If an object admits a universal $\B$-central extension then it is $\B$-perfect by Lemma~\ref{UCE-then-Perfect}. Conversely, let $f\colon {B\to A}$ be a weakly universal central extension of a $\B$-perfect object $A$ (Lemma~\ref{Lemma-Weakly-UCE}). Then by Lemma~\ref{Lemma-Perfect-Subobject} it admits a (weakly universal central) subobject with a $\B$-perfect domain. By Proposition~\ref{Proposition-Universal-vs-Weak-Universal}, this subobject is also universal. The shape of the short exact sequence follows from~\ref{Existence of universal central extension}.
\end{proof}

\begin{proposition}\label{Proposition-Quotient-of-UCE}
Let $\A$ be a semi-abelian category with enough projectives and $\B$ a Birkhoff subcategory of $\A$. If $f\colon {B\to A}$ is a $\B$-central extension with a $\B$-perfect domain~$B$, then $f$ is a quotient of a universal $\B$-central extension.
\end{proposition}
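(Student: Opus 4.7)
The plan is to exhibit $f$ as a quotient, in the slice category over~$A$, of the universal $\B$-central extension of~$A$. Since $B$ is $\B$-perfect and regular quotients of $\B$-perfect objects are $\B$-perfect (see the discussion of perfect objects), the object $A$ is $\B$-perfect, so Theorem~\ref{Theorem-Universal-Central-Extension} yields a universal $\B$-central extension $u^{\b}_{A}\colon \U(A,\b)\to A$. My task reduces to constructing a regular epimorphism $\overline{f}\colon \U(A,\b)\to B$ satisfying $f\comp \overline{f} = u^{\b}_{A}$.

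The construction proceeds via a projective presentation. Choose $p\colon P\to B$ with kernel~$L$; then $f\comp p\colon P\to A$ is a projective presentation of~$A$ with kernel $M = p^{-1}(K)$, where $K = \ker f$. By~\ref{Subsection-Baer-Invariants}, $\U(A,\b) = [P,P]_{\B}/[M,P]_{\B}$ and $u^{\b}_{A}$ is induced by $f\comp p\comp \mu_{P}\colon [P,P]_{\B}\to A$. I next want to show that the composite $p\comp \mu_{P}\colon [P,P]_{\B}\to B$ factors through the quotient $[P,P]_{\B}\to \U(A,\b)$ and that this factorisation is a regular epimorphism.

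Factoring is possible because $f$ is $\B$-central: $[K,B]_{\B}=0$, and naturality of the commutator applied to the morphism of extensions~$p$ (from $f\comp p$ to $f$ over $\mathrm{id}_A$) gives $p([M,P]_{\B})\subseteq [K,B]_{\B}=0$, hence $[M,P]_{\B}\subseteq L = \ker p$. The induced map $\overline{f}\colon \U(A,\b)\to B$ then automatically satisfies $f\comp \overline{f} = u^{\b}_{A}$. To see $\overline{f}$ is regular epic, I would invoke the Birkhoff square for~$p$ to factor $p\comp \mu_{P}$ as $\mu_{B}\comp [p,p]_{\B}$ with $[p,p]_{\B}$ a regular epimorphism (Section~\ref{Subsection-Commutators-and-Centralisation}); the regular image of~$\overline{f}$ is therefore $[B,B]_{\B}$, and this equals~$B$ by $\B$-perfectness. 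Hence $f$ is a quotient of~$u^{\b}_{A}$, as required. The one delicate step I anticipate is exactly this last one: a purely abstract attempt via Lemma~\ref{Lemma-Perfect-Subobject}, namely image-factoring~$\overline{f}$ and hoping that the resulting perfect sub-extension of~$f$ exhausts~$f$, seems to give no handle, whereas the Hopf-formula route above makes the surjectivity of~$\overline{f}$ transparent through the perfectness of~$B$.
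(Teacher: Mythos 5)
Your proof is correct and follows essentially the paper's own argument: the paper likewise takes a projective presentation $p\colon P\to B$, uses $f\comp p$ as a projective presentation of $A$, and obtains a regular epic comparison onto $f$ from the Birkhoff property (the commutator functor preserving regular epimorphisms) together with $B=[B,B]_{\B}$. Your explicit Hopf-formula description $\U(A,\b)=[P,P]_{\B}/[M,P]_{\B}$ and the induced $\overline{f}$ is just a concrete rendering of the paper's two steps, namely centralisation to a weakly universal extension $V\to A$ followed by passage to the perfect subobject as in Lemma~\ref{Lemma-Perfect-Subobject}.
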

\begin{proof}
The construction in the proof of Theorem~\ref{Theorem-Universal-Central-Extension} may be adapted to the given extension $f$ in such a way that the resulting morphism ${u\to f}$ is a regular epimorphism. We take a projective presentation $p\colon{P\to B}$ and use the composite~$f\comp p\colon {P\to A}$ as a projective presentation of $A$. After centralisation we obtain a weakly universal $\B$-central extension $v\colon {V\to A}$ as in Lemma~\ref{Lemma-Weakly-UCE} and a regular epic comparison ${v\to f}$. Using that $B$ is $\B$-perfect, passing to the perfect subobject as in Lemma~\ref{Lemma-Commutator-Perfect} gives us the needed universal $\B$-central extension $u\colon {U\to A}$ together with the induced comparison morphism ${v\to f}$. This morphism is still a regular epimorphism by the Birkhoff property of $\B$ (see Subsection~\ref{Subsection-Commutators-and-Centralisation}).
\end{proof}

\subsection{Universal central extensions and abelianisation}
It is worth remarking here that a universal $\B$-central extension is always central in an absolute sense, namely, with respect to the abelianisation functor $\ab\colon{\A\to \Ab(\A)}$. Here $\Ab(\A)$ is the Birkhoff subcategory of $\A$ consisting of all objects that admit an internal abelian group structure; see, for instance, \cite{Bourn-Gran}.

\begin{proposition}\label{Proposition-Abelian-Result}
Let $\A$ be a semi-abelian category and $\B$ a Birkhoff subcategory of $\A$. If $f\colon {B\to A}$ is a $\B$-central extension with a $\B$-perfect domain~$B$, then $f$ is $\Ab(\A)$-central. In particular, universal $\B$-central extensions are always $\Ab(\A)$-central.
\end{proposition}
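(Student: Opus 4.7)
The plan is to produce from the $\B$-centrality of $f$ and the $\B$-perfectness of $B$ a product decomposition $B\times_A B\cong B\times M$ strong enough to force the kernel $K=\ker f$ to be an abelian object and the kernel-pair projection $f_1$ to be $\Ab(\A)$-trivial. Since $f$ is $\B$-central and $f_1\colon B\times_A B\to B$ is a split epimorphism (split by the diagonal $\Delta$), Subsection~\ref{Subsection-Central-Extensions} implies that $f_1$ is already $\B$-trivial. Because $\b(B)=0$, the Birkhoff pullback square for $f_1$ reduces to an isomorphism $B\times_A B\cong B\times M$ with $M=\b(B\times_A B)$ and $f_1$ identified with the first projection. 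Naturality of the reflection unit $\eta$, together with $\eta_B=0$, matches $\Delta$ with the first-factor inclusion and $\ker f_1\cong K$ with the second-factor inclusion, yielding an isomorphism $K\cong M$.

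Next I would transport the other projection $f_0$ across this iso to obtain a morphism $\phi\colon B\times M\to B$. Its restriction to the first factor is $\id_B$ (since $f_0\comp\Delta=\id_B$) and its restriction to the second factor is the inclusion $K\hookrightarrow B$ (using $f_0|_{\ker f_1}$ together with $M\cong K$). By the Huq characterisation of cooperating subobjects in a semi-abelian category, such a $\phi$ witnesses that $B$ and $K$ commute as subobjects of $B$, so the Huq commutator $[B,K]$ vanishes. Monotonicity then gives $[K,K]\leq[B,K]=0$, so $K$ is an abelian object and $M\cong K$ lies in $\Ab(\A)$. Using that the abelianisation reflector preserves finite products and fixes $M$, the $\Ab(\A)$-Birkhoff square for $f_1$ becomes the canonical pullback of $B\times M\to B$ over $\ab(B)\times M\to\ab(B)$. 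Thus $f_1$ is $\Ab(\A)$-trivial and $f$ is $\Ab(\A)$-central. The concluding statement follows at once from Lemma~\ref{UCE-then-Perfect}.

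The most delicate step will be the careful identification of the subobjects $\Delta(B)$ and $\ker f_1$ with the two factor inclusions of $B\times M$: once these matches are in place, the morphism $f_0$ reveals itself as a genuine Huq cooperator for $B$ and $K$, and the remaining abelianisation computation is routine.
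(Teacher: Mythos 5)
Your argument is correct, but it finishes differently from the paper. The first half is, in substance, the paper's own observation in another guise: the paper uses that $\mu_{B}$ is an isomorphism ($\B$-perfectness) and that $[f_{1},f_{1}]_{\B}$ is an isomorphism ($\B$-centrality) to identify the diagonal $\Delta\colon B\to B\times_{A}B$ with the kernel $\mu_{B\times_{A}B}$ of $\eta_{B\times_{A}B}$, which is exactly the statement that in your splitting $B\times_{A}B\cong B\times M$ (obtained from the $\B$-triviality of $f_{1}$ plus $\b(B)=0$) the diagonal is the first product injection. Where you diverge is in the conclusion: the paper stops once $\Delta$ is known to be a kernel and cites Proposition~3.1 of \cite{Bourn-Gran} to deduce $\Ab(\A)$-centrality, whereas you re-prove the needed direction of that result by hand --- transporting $f_{0}$ to a Huq cooperator $\phi\colon B\times M\to B$, deducing $[K,B]=0$, hence $[K,K]=0$ and $K$ abelian, and then checking directly that the $\Ab(\A)$-Birkhoff square for $f_{1}$ is a pullback. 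This buys self-containedness (no appeal to the Bourn--Gran characterisation) at the cost of length and of two auxiliary facts you invoke without justification: that an object with vanishing Huq self-commutator is abelian, and that the abelianisation reflector preserves binary products. Both are indeed valid in any semi-abelian category (the latter because the product injections are jointly strongly epimorphic, so for abelian $T$ a morphism $X\times Y\to T$ is the ``sum'' of its restrictions, whence $\eta_{X}\times\eta_{Y}$ has the universal property of the unit), but they deserve a word or a reference; alternatively, once you have identified $\Delta$ with the first injection of $B\times M$ --- equivalently, with the kernel of $\eta_{B\times_{A}B}$ --- you could simply cite \cite{Bourn-Gran} as the paper does and drop the second half entirely. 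The final reduction of the ``in particular'' statement to Lemma~\ref{UCE-then-Perfect} agrees with the paper.
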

\begin{proof}
We have $B\cong [B,B]_{\B}$ since $B$ is $\B$-perfect; $[B,B]_{\B}\cong [{B\times_A B},{B\times_A B}]_{\B}$ because $f$ is $\B$-central. Hence the diagonal ${B\to B\times_A B}$, being isomorphic to $\mu_{B}\colon{[B\times_A B,B\times_A B]_{\B}\to B\times_A B}$, is a kernel. By Proposition~3.1 in~\cite{Bourn-Gran}, this implies that $f$ is $\Ab(\A)$-central. Finally, if $f\colon{B\to A}$ is a universal $\B$-central extension then $B$ is $\B$-perfect.
\end{proof}

\section{Nested Birkhoff subcategories}\label{Section-Nested-Birkhoff-Subcategories}
We now consider the situation where a Birkhoff subcategory $\B$ of a semi-abelian category $\A$ has a further Birkhoff subcategory $\C$ so that they form a chain of nested semi-abelian categories with enough projectives, $\C\subset\B\subset\A$. For instance, $\C$ could be $\Ab(\A)$ as in Theorem~\ref{Theorem-Homology} below. Then there is a commutative triangle of left adjoint functors (all right adjoints are inclusions):
\begin{equation}\label{Triangle}
\vcenter{\xymatrix@!0@=4.5em{{\A} \ar[rr]^-{\b} \ar[dr]_-{\cc\b} && \B \ar[dl]^-{\cc}\\
& \C}}
\end{equation}

\begin{lemma}\label{Lemma-Nesting}
Under the given circumstances,
\begin{enumerate}
\item an object of $\B$ is $\cc$-perfect if and only if it is $\cc\b$-perfect;
\item an extension in $\B$ is $\cc$-central if and only if it is $\cc\b$-central;
\item an extension of $\A$ is $\B$-central as soon as it is $\cc\b$-central.
\end{enumerate}
\end{lemma}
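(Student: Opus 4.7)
The plan is to reduce all three statements to the characterisation from Subsection~\ref{Subsection-Commutators-and-Centralisation}: an extension $f\colon{B\to A}$ with kernel $K$ is central with respect to a Birkhoff subcategory $\B$ if and only if the one-dimensional commutator $[K,B]_{\B}$ is zero. Throughout, the basic observation driving the argument is that, because $\B$ is reflective with unit $\eta^{\b}$, the restriction of $\b$ to $\B$ is naturally isomorphic to the identity; equivalently, $\cc\b$ restricted to $\B$ is naturally isomorphic to $\cc\colon{\B\to\C}$.

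Part~(1) is immediate from this observation: if $B$ lies in $\B$ then $\eta^{\b}_{B}$ is an isomorphism, so $\cc\b(B)\cong \cc(B)$, and the conditions $\cc\b(B)=0$ and $\cc(B)=0$ coincide. For~(2), given an extension $f\colon{B\to A}$ in $\B$, the kernel pair $B\times_A B$ lies in $\B$ too, since $\B$ is closed in $\A$ under finite limits and subobjects. Hence $\eta^{\b}$ is an isomorphism on each of $B$ and $B\times_A B$, and running the construction of the commutator through the reflector $\cc\b\colon{\A\to \C}$ agrees up to natural isomorphism with running it through $\cc\colon{\B\to \C}$. So $[K,B]_{\C}$ computed in $\A$ coincides with the corresponding commutator computed in $\B$, and $f$ is $\cc\b$-central in $\A$ if and only if it is $\cc$-central in $\B$.

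The content of the lemma is in~(3). The factorisation $\eta^{\cc\b}_{X}=\eta^{\cc}_{\b(X)}\comp \eta^{\b}_{X}$ shows that $[X,X]_{\B}=\Ker(\eta^{\b}_{X})$ injects into $[X,X]_{\C}=\Ker(\eta^{\cc\b}_{X})$, by the universal property of the kernel; this inclusion $\iota_{X}$ is natural in $X$ since both sides are kernels of natural transformations. Applying naturality to the kernel pair projection $f_{0}\colon{B\times_A B\to B}$ of an arbitrary extension $f\colon{B\to A}$ yields a commutative square
\[
\xymatrix@C=2em{[B\times_A B,B\times_A B]_{\B} \ar@{{ |>}->}[r]^-{\iota} \ar@{-{ >>}}[d]_-{[f_{0},f_{0}]_{\B}} & [B\times_A B,B\times_A B]_{\C} \ar@{-{ >>}}[d]^-{[f_{0},f_{0}]_{\C}} \\ [B,B]_{\B} \ar@{{ |>}->}[r]_-{\iota} & [B,B]_{\C}}
\]
whose horizontal arrows are monomorphisms. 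Taking kernels of the vertical regular epimorphisms produces an induced monomorphism $[K,B]_{\B}\hookrightarrow [K,B]_{\C}$, so that $[K,B]_{\C}=0$ forces $[K,B]_{\B}=0$; that is, $\cc\b$-central implies $\B$-central.

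No substantial obstacle is expected: the only point requiring care is bookkeeping between the three reflectors and the observation that, under our closure hypotheses, commutators of objects of $\B$ computed inside $\A$ agree with those computed inside $\B$. The rest is an elementary diagram chase built on the commutator characterisation of central extensions.
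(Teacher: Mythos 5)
Your proof is correct, and for parts (1) and (2) it is essentially the paper's argument in different clothing: (1) is the observation that $\cc(B)=\cc\b(B)$ for $B$ in $\B$, and your commutator version of (2) amounts to the paper's remark that the inclusion $\B\hookrightarrow\A$ preserves and reflects the pullback square characterising normality (the criterion ``central iff $[K,B]=0$'' is itself extracted from that square via Lemma~\ref{Lemma-Pullback}), together with the fact that kernels and kernel pairs of objects of $\B$ are computed as in $\A$. Where you genuinely diverge is in (3). The paper settles it in one line by invoking the fact that the reflector $\b$ preserves the pullback $\eta^{\cc\b}_{B}\comp f_{0}=\cc\b(f_{0})\comp\eta^{\cc\b}_{B\times_{A}B}$ for any $\cc\b$-central extension $f$---a preservation property of Birkhoff reflectors which it does not prove---and then, implicitly, cancels pullbacks along the factorisation of the units. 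You instead use that factorisation $\eta^{\cc\b}_{X}=\eta^{\cc}_{\b(X)}\comp\eta^{\b}_{X}$ only to produce a natural monomorphism from $[X,X]_{\B}=\Ker(\eta^{\b}_{X})$ into $\Ker(\eta^{\cc\b}_{X})$, apply naturality at $f_{0}$, and pass to kernels of the vertical comparison maps to get a monomorphism $[K,B]_{\B}\rightarrowtail[K,B]_{\cc\b}$, so that vanishing of the latter forces vanishing of the former. This is valid (the induced map on kernels is a restriction of a mono, hence mono, and centrality is indeed equivalent to the vanishing of the kernel because $[f_{0},f_{0}]$ is always a regular epimorphism, as recalled in Subsection~\ref{Subsection-Commutators-and-Centralisation}), and it is arguably more self-contained than the paper's route, since it trades the unproved pullback-preservation statement for elementary naturality; what it buys less of is the stronger intermediate information contained in the paper's argument, namely that $\b$ sends the $\cc\b$-normality square to the $\B$-normality data directly. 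The only points worth making explicit in a final write-up are the ones you already flag: closure of $\B$ under the relevant limits, and that regular epimorphisms between objects of $\B$ are the same computed in $\B$ or in $\A$.
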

\begin{proof}
If $B$ is an object of $\B$ then $\cc(B)=\cc\b(B)$, which proves the first statement. As for the second statement, an extension $f\colon{B\to A}$ in $\B$ is $\cc$-central if and only if the square in the diagram on the left
\[
\vcenter{\xymatrix{B\times_A B \ar@{-{ >>}}[r]^-{f_{0}} \ar@{-{ >>}}[d]_-{\eta^{\cc}_{B\times_A B}} & B \ar@{-{ >>}}[r]^-{f} \ar@{-{ >>}}[d]^-{\eta^{\cc}_{B}} & A\\
\cc(B\times_A B) \ar@{-{ >>}}[r]_-{\cc(f_{0})} & \cc(B)}}
\qquad\qquad
\vcenter{\xymatrix{B\times_A B \ar@{-{ >>}}[r]^-{f_{0}} \ar@{-{ >>}}[d]_-{\eta^{\cc\b}_{B\times_A B}} & B \ar@{-{ >>}}[d]^-{\eta^{\cc\b}_{B}}\\
\cc\b(B\times_A B) \ar@{-{ >>}}[r]_-{\cc\b(f_{0})} & \cc\b(B)}}
\]
is a pullback in $\B$. Now the inclusion of $\B$ into $\A$ preserves and reflects all limits and moreover $\cc(f_{0})=\cc\b(f_{0})$, so that $f$ being $\cc$-central is equivalent to $f$ being $\cc\b$-central. The third statement follows from the fact that $\b$ preserves the pullback on the right above for any $\cc\b$-central extension $f$ in $\A$.
\end{proof}

\begin{lemma}\label{Lemma-Adjunction}
For any object $B$ of $\B$, the reflection from $\A$ to $\B$ restricts to an adjunction
\[
\xymatrix@1{{\Centr_{\cc\b}(B)} \ar@<1ex>[r]^-{\b} \ar@{}[r]|-{\perp} & {\Centr_{\cc}(B).} \ar@<1ex>[l]^-{\supset}}
\]
Hence the functor $\b$ preserves universal central extensions:
\[
\b\bigl(u^{\cc\b}_{B}\colon \U(B,\cc\b)\to B\bigr)\,\cong\, \bigl(u^{\cc}_{B}\colon \U(B,\cc)\to B\bigr),
\]
for any $\cc$-perfect object $B$.
\end{lemma}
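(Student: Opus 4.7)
The plan is to display the stated adjunction as a restriction of the reflection $\b\colon\A\to\B$ (viewed pointwise over $B$), by verifying that $\b$ and its right adjoint carry central extensions to central extensions on each side. Once this restriction is in place, the preservation of universal central extensions comes for free: by Lemma~\ref{Lemma-Nesting}(1) a $\cc$-perfect object of~$\B$ is $\cc\b$-perfect in $\A$, so by Theorem~\ref{Theorem-Universal-Central-Extension} both $u^{\cc\b}_{B}$ and $u^{\cc}_{B}$ exist as initial objects of the respective central-extension categories, and left adjoints preserve initial objects.

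The direction via the right adjoint is easy. A $\cc$-central extension $g\colon X\to B$ of $B$ in $\B$ is $\cc\b$-central in~$\B$ by Lemma~\ref{Lemma-Nesting}(2); since the inclusion $\B\hookrightarrow\A$ preserves limits (it is a right adjoint), the Birkhoff pullback square witnessing $\cc\b$-triviality of a kernel-pair projection of~$g$ in $\B$ remains a pullback in~$\A$, and thus $g$ is $\cc\b$-central in~$\A$.

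The opposite direction is more delicate. Let $f\colon A\to B$ be $\cc\b$-central in $\A$; I wish to show that $\b(f)\colon\b(A)\to B$ is $\cc$-central in $\B$. By Lemma~\ref{Lemma-Nesting}(2) it suffices to show that $\b(f)$ is $\cc\b$-central in $\A$, and for this I would use the commutator characterisation from Subsection~\ref{Subsection-Commutators-and-Centralisation}: the hypothesis $[K,A]_{\cc\b}=0$ with $K=\ker f$ must be transferred to $[K',\b(A)]_{\cc\b}=0$ with $K'=\ker\b(f)$. The morphism of extensions $(\eta^{\B}_{A},1_{B})\colon f\to\b(f)$ has two regular-epic components, which induces a regular epimorphism between the kernel pairs $A\times_{B}A\twoheadrightarrow\b(A)\times_{B}\b(A)$ (factoring through $A\times_{B}\b(A)$, each stage being a pullback of $\eta^{\B}_{A}$ along a projection). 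Because the commutator $[-,-]_{\cc\b}$ is functorial and preserves regular epimorphisms---a consequence of the Birkhoff property for the reflector $\cc\b$ applied to the resulting diagram of short exact sequences---the induced comparison $[K,A]_{\cc\b}\to[K',\b(A)]_{\cc\b}$ is again a regular epimorphism, forcing $[K',\b(A)]_{\cc\b}=0$.

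The restricted adjunction now follows formally: the reflection unit $\eta^{\B}_{A}\colon A\to\b(A)$ is itself a morphism of extensions $f\to\b(f)$ over $B$, and so supplies the unit of the restricted adjunction component-wise. The main obstacle is the verification, in a semi-abelian setting, that the induced map $[K,A]_{\cc\b}\twoheadrightarrow[K',\b(A)]_{\cc\b}$ is genuinely regular epic; this amounts to a careful diagram chase exploiting the way the commutator $[-,-]_{\cc\b}$ behaves under regular epimorphisms of extensions, and is the most technical point in the argument.
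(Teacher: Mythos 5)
Your overall architecture agrees with the paper's: show that the two functors restrict to the categories of central extensions over $B$, and then obtain the displayed isomorphism from Lemma~\ref{Lemma-Nesting}(1), Theorem~\ref{Theorem-Universal-Central-Extension} and the fact that left adjoints preserve initial objects. The inclusion direction and the formal restriction of the adjunction are fine. The gap is in the crucial step, that $\b$ carries a $\cc\b$-central extension $f\colon A\to B$ to a $\cc$-central one. You claim that the induced comparison $[K,A]_{\cc\b}\to[K',\b(A)]_{\cc\b}$ is a regular epimorphism ``because $[-,-]_{\cc\b}$ is functorial and preserves regular epimorphisms''. What that property (Subsection~\ref{Subsection-Commutators-and-Centralisation}) actually gives is only that $[A\times_BA,A\times_BA]_{\cc\b}\to[\b(A)\times_B\b(A),\b(A)\times_B\b(A)]_{\cc\b}$ and $[A,A]_{\cc\b}\to[\b(A),\b(A)]_{\cc\b}$ are regular epimorphisms; but $[K,A]_{\cc\b}$ and $[K',\b(A)]_{\cc\b}$ are the \emph{kernels} of the horizontal arrows $[f_0,f_0]_{\cc\b}$ and $[\b(f)_0,\b(f)_0]_{\cc\b}$, and in a commutative square of regular epimorphisms the induced morphism on kernels of the horizontals need not be regular epic (already in $\Ab$: map the exact sequence $0\to 0\to\Z\to\Z\to 0$ onto $0\to\Z_{2}\to\Z_{4}\to\Z_{2}\to 0$ by reduction; both vertical maps are surjective, yet $0\to\Z_{2}$ is not). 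So the step you yourself flag as ``the most technical point'' is genuinely missing, and without it the proof is incomplete.

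The fact you need is true, but it is precisely the nontrivial input. Since the comparison of the square $(\eta^{\b}_{A},1_{B})\colon f\to\b(f)$ to the pullback is $\eta^{\b}_{A}$ itself, this square is a regular pushout (a double extension), and for such squares the induced morphism between the one-dimensional commutators \emph{is} a regular epimorphism; equivalently, central extensions are stable under quotients of this kind. That stability is exactly what the paper's proof invokes in one line: $\b(g)$ is $\cc\b$-central ``as a quotient of $g$'' (via the morphism of extensions $(\eta^{\b}_{C},1_{B})$), it is then $\cc$-central by Lemma~\ref{Lemma-Nesting}(2), and finally $\b$ preserves initial objects as a left adjoint. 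To repair your argument, either cite this closure of $\cc\b$-central extensions under such quotients (standard in the Janelidze--Kelly/Baer-invariant framework, and used again by the paper in the proof of Proposition~\ref{Proposition-UCE-necessary}), or prove directly that for a double extension the restriction to the kernels of $[f_{0},f_{0}]_{\cc\b}$ and $[\b(f)_{0},\b(f)_{0}]_{\cc\b}$ is regular epic, for instance by showing that the square formed by these two arrows is itself a regular pushout; your present justification does not yield this.
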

\begin{proof}
First of all, by Lemma~\ref{Lemma-Nesting}.2, $\Centr_{\cc}(B)\subset (\B\downarrow B)$ is a subcategory of $\Centr_{\cc\b}(B)\subset (\A\downarrow B)$.

Suppose that $g\colon{C\to B}$ is a $\cc\b$-central extension. Applying the functor $\b$, we obtain the extension $\b(g)=g\comp\eta_{C}^{\b}\colon{\b(C)\to B}$, which is $\cc\b$-central as a quotient of~$g$. Being an extension in $\B$, $\b(g)$ is $\cc$-central by Lemma~\ref{Lemma-Nesting}.2.

Finally, as any left adjoint functor, $\b$ preserves initial objects.
\end{proof}

\begin{proposition}\label{Proposition-Comparison}
Suppose that $\C\subset \B\subset \A$ is a chain of inclusions of Birkhoff subcategories (with the left adjoints written as in~\eqref{Triangle}) of a semi-abelian category~$\A$. If~$B$ is a $\cc$-perfect object of $\B$ then we have the exact sequence
\[
\xymatrix{0 \ar[r] & [\U(B,\cc\b),\U(B,\cc\b)]_{\B} \ar@{{ |>}->}[r] & \H_{2}(B,\cc\b)\ar@{ >>}[r] & \H_{2}(B,\cc) \ar[r] & 0,}
\]
and $u^{\cc\b}_{B}=u^{\cc}_{B}$ if and only if $[\U(B,\cc\b),\U(B,\cc\b)]_{\B}$ is zero.
\end{proposition}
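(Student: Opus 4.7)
The strategy is to apply the Snake Lemma to the canonical morphism of short exact sequences comparing the two universal central extensions of $B$; the essential input is Lemma~\ref{Lemma-Adjunction}, which identifies $u^{\cc}_{B}$ with the $\b$-reflection of $u^{\cc\b}_{B}$. To set this up, I would first transfer the perfectness hypothesis via Lemma~\ref{Lemma-Nesting}(1): since $B\in\B$ is $\cc$-perfect, it is also $\cc\b$-perfect as an object of~$\A$. Theorem~\ref{Theorem-Universal-Central-Extension} then produces the two universal central extensions $u^{\cc\b}_{B}\colon\U(B,\cc\b)\to B$ in $\A$ and $u^{\cc}_{B}\colon\U(B,\cc)\to B$ in~$\B$, with respective kernels $\H_{2}(B,\cc\b)$ and $\H_{2}(B,\cc)$. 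Writing $U:=\U(B,\cc\b)$, Lemma~\ref{Lemma-Adjunction} supplies an isomorphism $\b(U)\cong\U(B,\cc)$ under which the reflection unit $\eta^{\b}_{U}\colon U\to\b(U)$ satisfies $u^{\cc}_{B}\comp\eta^{\b}_{U}=u^{\cc\b}_{B}$; by the construction recalled in Subsection~\ref{Subsection-Commutators-and-Centralisation}, $\eta^{\b}_{U}$ is a regular epimorphism whose kernel is exactly $[U,U]_{\B}$.

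These data assemble into a morphism of short exact sequences whose right-hand vertical is the identity on~$B$ and whose middle vertical is $\eta^{\b}_{U}$; the left-hand vertical is then the unique induced map $\H_{2}(B,\cc\b)\to\H_{2}(B,\cc)$. Applying the Snake Lemma — available in any semi-abelian category — the right column contributes nothing (trivial kernel and cokernel), while the middle column has kernel $[U,U]_{\B}$ and trivial cokernel, so the six-term exact sequence collapses to exactly
\[
0 \to [U,U]_{\B} \to \H_{2}(B,\cc\b) \to \H_{2}(B,\cc) \to 0.
\]

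For the second statement, the equality $u^{\cc\b}_{B}=u^{\cc}_{B}$ is equivalent to $\eta^{\b}_{U}$ being an isomorphism, and by the Short Five Lemma (Lemma~\ref{Lemma-Pullback}) applied to the comparison diagram this is in turn equivalent to its kernel $[U,U]_{\B}$ being zero. I do not anticipate any genuine obstacle: once the identifications of Lemmas~\ref{Lemma-Nesting} and~\ref{Lemma-Adjunction} have been unpacked, the entire argument is a direct Snake-Lemma computation.
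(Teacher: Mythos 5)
Your proposal follows essentially the same route as the paper: the proof in the text assembles exactly the comparison you describe---$u^{\cc}_{B}\comp\eta^{\b}_{\U(B,\cc\b)}=u^{\cc\b}_{B}$, obtained from Lemma~\ref{Lemma-Adjunction} and Theorem~\ref{Theorem-Universal-Central-Extension}---into the $3\times 3$ diagram of Figure~\ref{3x3} and reads off its left-hand column, so trading the $3\times3$ Lemma for the Snake Lemma is only a cosmetic difference.

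One step, however, is glossed over in a way that matters in the non-abelian setting. From the six-term sequence you only get that the induced map $k\colon\H_{2}(B,\cc\b)\to\H_{2}(B,\cc)$ has kernel $[\U(B,\cc\b),\U(B,\cc\b)]_{\B}$ and \emph{zero cokernel}; but in a semi-abelian category a morphism with zero cokernel need not be a regular epimorphism (in $\Gp$, the inclusion of any subgroup whose normal closure is everything has zero cokernel), and the semi-abelian Snake Lemma is moreover usually stated under properness assumptions on the vertical arrows, which for $k$ are not known beforehand. So the claim that the sequence ``collapses to exactly'' the displayed short exact sequence skips the surjectivity of $k$. The fix is immediate with a tool you already cite: since the right-hand vertical $1_{B}$ is a monomorphism, Lemma~\ref{Lemma-Pullback}.2 says that the square formed by the two kernels and the vertical arrows is a pullback, so $k$ is a pullback of the regular epimorphism $\eta^{\b}_{\U(B,\cc\b)}$ and hence itself a regular epimorphism, with $\Ker(k)\cong\Ker(\eta^{\b}_{\U(B,\cc\b)})=[\U(B,\cc\b),\U(B,\cc\b)]_{\B}$ (note that this commutator sits inside $\H_{2}(B,\cc\b)$ precisely because $u^{\cc\b}_{B}=u^{\cc}_{B}\comp\eta^{\b}_{\U(B,\cc\b)}$). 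With this observation the Snake Lemma becomes superfluous, and your treatment of the final equivalence via the Short Five Lemma is fine as stated.
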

\begin{proof}
By Lemma~\ref{Lemma-Adjunction} and Theorem~\ref{Theorem-Universal-Central-Extension}, if $B$ is a $\cc$-perfect object of $\B$ then the comparison morphism between the induced universal central extensions gives rise to the $3\times 3$ diagram in Figure~\ref{3x3}.
\end{proof}
\begin{figure}[h]
\scalebox{.9}{
\xymatrix{[\U(B,\cc\b),\U(B,\cc\b)]_{\B} \ar@{{ |>}->}[d] \ar@{=}[r] & [\U(B,\cc\b),\U(B,\cc\b)]_{\B} \ar@{{ |>}->}[d]\ar@{ >>}[r] & 0 \ar@{{ |>}->}[d] \\
\H_{2}(B,\cc\b) \ar@{ >>}[d] \ar@{{ |>}->}[r] & \U(B,\cc\b) \ar@{ >>}[d]_-{\eta_{\U(B,\cc\b)}^{\b}} \ar@{ >>}[r]^-{u^{\cc\b}_{B}} & B \ar@{=}[d]\\
\H_{2}(B,\cc) \ar@{{ |>}->}[r] & \U(B,\cc) \ar@{ >>}[r]^-{u^{\cc}_{B}} & B}}
\caption{Proof of Proposition~\ref{Proposition-Comparison}}\label{3x3}
\end{figure}

\section{The universal central extension condition}\label{Section-UCE}
We now prove a classical recognition result for universal $\B$-central extensions. To do so, we shall need that $\A$ satisfies the universal central extension \UCE\ (see Definition~\ref{Condition-UCE} below). We show that this condition is not only sufficient but in some sense also necessary (Proposition~\ref{Proposition-UCE-domain}). We shall moreover ask that $\B$ contains~$\Ab(\A)$, so that we may suitably reduce the given situation to the case of abelianisation. The examples in~\ref{Subsection-Counterexamples} explain why these conditions are not automatically satisfied. The main result we work towards is Theorem~\ref{Theorem-Homology}, which says that a $\B$-central extension $u\colon U\to A$ is universal if and only if $\H_{1}(U,\b)=\H_{2}(U,\b)=0$.

\begin{definition}\label{Condition-UCE}
Let $\A$ be a semi-abelian category with enough projectives. We say that $\A$ satisfies \defn{\UCEspecial} when the following holds: if $B$ is an $\Ab(\A)$-perfect object, and $f\colon{B\to A}$ and $g\colon{C\to B}$ are $\Ab(\A)$-central extensions, then the extension $f\comp g$ is $\Ab(\A)$-central.
\end{definition}

\begin{lemma}\label{Lemma-Composition-Central-Extensions-Relative}
Let $\A$ be a semi-abelian category with enough projectives satisfying \UCE\ and~$\B$ a Birkhoff subcategory of $\A$ that contains $\Ab(\A)$. If $u\colon{U\to A}$ is a $\B$-central extension and $v\colon{V\to U}$ is a universal $\B$-central extension then the extension $u\comp v$ is $\B$-central.
\end{lemma}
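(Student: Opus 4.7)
The plan is to collapse the whole statement to the level of the smallest Birkhoff subcategory in sight, namely $\Ab(\A)$, apply \UCE\ there, and then climb back up to $\B$-centrality. The set-up is that of Section~\ref{Section-Nested-Birkhoff-Subcategories} applied to the chain $\Ab(\A)\subset\B\subset\A$, and the two ``bridges'' between levels are Proposition~\ref{Proposition-Abelian-Result} (going from $\B$-central to $\Ab(\A)$-central when the domain is $\B$-perfect) and Lemma~\ref{Lemma-Nesting}~(3) (going from $\Ab(\A)$-central to $\B$-central for arbitrary extensions of $\A$).

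First I would observe that since $v\colon V\to U$ is a universal $\B$-central extension, Lemma~\ref{UCE-then-Perfect} forces both $V$ and $U$ to be $\B$-perfect. Because $\Ab(\A)\subseteq\B$, the reflector $\ab$ factors through $\b$ (as in diagram~\eqref{Triangle}), so $\b(U)=0$ immediately gives $\ab(U)=0$; hence $U$ is $\Ab(\A)$-perfect. Proposition~\ref{Proposition-Abelian-Result} then tells me that $v$, being a universal $\B$-central extension, is $\Ab(\A)$-central, and that $u$, being a $\B$-central extension with $\B$-perfect domain $U$, is also $\Ab(\A)$-central.

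At this point the hypotheses of \UCE\ are met with $B=U$, $f=u$, $g=v$: the object $U$ is $\Ab(\A)$-perfect and both $u$ and $v$ are $\Ab(\A)$-central. Therefore the composite $u\comp v$ is $\Ab(\A)$-central. Finally, Lemma~\ref{Lemma-Nesting}~(3) applied to the chain $\Ab(\A)\subset\B\subset\A$ promotes this to $\B$-centrality of $u\comp v$, which is what we wanted. I do not foresee any real obstacle: the proof is purely a matter of assembling results already in place, and the only conceptual point worth flagging is that \UCE, although formulated only for $\Ab(\A)$, is strong enough to govern composition of central extensions with respect to any Birkhoff subcategory $\B\supseteq\Ab(\A)$, precisely because of the two bridges mentioned above.
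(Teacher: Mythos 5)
Your proof is correct and is essentially the paper's own argument: reduce to $\Ab(\A)$ via Proposition~\ref{Proposition-Abelian-Result} (using Lemma~\ref{UCE-then-Perfect} to get $\B$-perfectness, hence $\Ab(\A)$-perfectness, of $U$), apply \UCE, and return to $\B$-centrality by Lemma~\ref{Lemma-Nesting}(3). No gaps to report.
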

\begin{proof}
By Proposition~\ref{Proposition-Abelian-Result} both $u$ and $v$ are $\Ab(\A)$-central. Moreover, since $\Ab(\A)$ is contained in the Birkhoff subcategory~$\B$ of~$\A$, the objects $U$, $V$ and $A$ are $\Ab(\A)$-perfect. Now by \UCE, the composite $u\comp v\colon{V\to A}$ is also $\Ab(\A)$-central. Again using that $\B$ is bigger than $\Ab(\A)$ we see that $u\comp v\colon{V\to A}$ is a $\B$-central extension (cf.\ Lemma~\ref{Lemma-Nesting}.3).
\end{proof}

Under the given assumptions, $u\comp v$ is in fact universal, as shown in Proposition~\ref{Proposition-UCE-domain}.

\begin{proposition}\label{Proposition-Universal-vs-Weak-Universal-(UCE)}
Let $\A$ be a semi-abelian category with enough projectives satisfying \UCE\ and~$\B$ a Birkhoff subcategory of $\A$ that contains $\Ab(\A)$. Then in Proposition~\ref{Proposition-Universal-vs-Weak-Universal}, condition~4 implies condition 1. Hence a~$\B$-central extension $u\colon{U\to A}$ is universal if and only if its domain $U$ is $\B$-perfect and projective with respect to all $\B$-central extensions.
\end{proposition}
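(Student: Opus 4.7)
The plan is to deduce condition~1 of Proposition~\ref{Proposition-Universal-vs-Weak-Universal} from condition~4; once this implication is established, the previously known implications $1\Leftrightarrow 2\Leftrightarrow 3\Rightarrow 4\Leftrightarrow 5$ collapse to a full equivalence, which directly yields the second assertion identifying universal $\B$-central extensions with those whose domain is $\B$-perfect and projective with respect to all $\B$-central extensions. So the task is: assuming $U$ is $\B$-perfect and $u\colon U\to A$ is weakly universal, I want to show that an arbitrary $\B$-central extension $v\colon V\to U$ splits.

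My strategy is to route everything through a universal $\B$-central extension of $U$ itself. First I would invoke Theorem~\ref{Theorem-Universal-Central-Extension}, which applies because $U$ is $\B$-perfect, to produce a universal $\B$-central extension $w\colon W\to U$. The pivotal step is then to split $w$. For this I would apply Lemma~\ref{Lemma-Composition-Central-Extensions-Relative}---this is precisely where the hypotheses \UCE\ and $\Ab(\A)\subseteq\B$ are used---to obtain that the composite $u\comp w\colon W\to A$ is itself $\B$-central. Weak universality of $u$ then furnishes a morphism $s\colon U\to W$ with $(u\comp w)\comp s=u$, that is, $u\comp(w\comp s)=u\comp 1_{U}$. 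Since $u$ is $\B$-central and $U$ is $\B$-perfect, Lemma~\ref{Lemma-Perfect-and-Central}.2 tells me that $\hom(U,u)$ is injective, so $w\comp s=1_{U}$ and $w$ splits.

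Finally I would transfer this splitting along $w$. Its universal property supplies a unique $\overline{v}\colon W\to V$ with $v\comp \overline{v}=w$, and then $\overline{v}\comp s\colon U\to V$ satisfies $v\comp \overline{v}\comp s=w\comp s=1_{U}$, so it splits $v$. The substantive obstacle is the middle step: without \UCE\ there would be no reason for $u\comp w$ to remain $\B$-central, and the splitting of $w$ would be out of reach. Once $\B$-centrality of $u\comp w$ is secured, the remainder is formal manipulation with the universal and weakly universal properties already in play.
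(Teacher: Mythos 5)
Your proof is correct and follows essentially the same route as the paper's: produce a universal $\B$-central extension of $U$ via Theorem~\ref{Theorem-Universal-Central-Extension}, use Lemma~\ref{Lemma-Composition-Central-Extensions-Relative} (where \UCE\ and $\Ab(\A)\subseteq\B$ enter) to make the composite with $u$ central, obtain a section from the weak universality of $u$, and transfer the splitting to arbitrary $\B$-central extensions of $U$. The only cosmetic difference is that you deduce $w\comp s=1_{U}$ from injectivity of $\hom(U,u)$ (Lemma~\ref{Lemma-Perfect-and-Central}), working directly from condition~4, whereas the paper invokes the uniqueness clause of universality (condition~5); since $4\Leftrightarrow 5$ rests on that same lemma, the arguments coincide in substance.
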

\begin{proof}
Suppose that $u\colon{U\to A}$ is a universal $\B$-central extension; we have to prove that every $\B$-central extension of $U$ splits. By Theorem~\ref{Theorem-Universal-Central-Extension}, $U$ admits a universal $\B$-central extension $v\colon{V\to U}$. It suffices to prove that this $v$ is a split epimorphism. By Lemma~\ref{Lemma-Composition-Central-Extensions-Relative}, the composite $u\comp v$ is $\B$-central. The weak $\B$-universality of $u$ now yields a morphism $s\colon{U\to V}$ such that $u\comp v \comp s = u$. But also $u\comp 1_{U}=u$, so that $v\comp s=1_{U}$ by the $\B$-universality of $u$, and the universal $\B$-central extension $v$ splits. The result follows.
\end{proof}

\begin{theorem}\label{Theorem-Homology}
Let $\A$ be a semi-abelian category with enough projectives satisfying \UCE\ and $\B$ a Birkhoff subcategory of $\A$ containing~$\Ab(\A)$. A $\B$-central extension $u\colon U\to A$ is universal if and only if $\H_{1}(U,\b)$ and $\H_{2}(U,\b)$ are zero.
\end{theorem}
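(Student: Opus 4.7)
The overall strategy is to reduce the theorem to the equivalence in Proposition~\ref{Proposition-Universal-vs-Weak-Universal-(UCE)}, by producing a universal $\B$-central extension $v\colon{V\to U}$ of $U$ itself via Theorem~\ref{Theorem-Universal-Central-Extension} and then reading off the two homology conditions from its associated short exact sequence.

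For the forward direction, suppose $u\colon{U\to A}$ is universal. Lemma~\ref{UCE-then-Perfect} gives that $U$ is $\B$-perfect, so $\H_{1}(U,\b)=\b(U)=0$. Theorem~\ref{Theorem-Universal-Central-Extension} then provides a universal $\B$-central extension $v\colon{V\to U}$ with kernel $\H_{2}(U,\b)$. By Proposition~\ref{Proposition-Universal-vs-Weak-Universal-(UCE)} applied to $u$, every $\B$-central extension of $U$ splits; in particular $v$ admits a section $s\colon{U\to V}$. The universality of $v$ then forces $s\comp v=1_{V}$ (both sides map to $v$ under postcomposition with $v$, and there is only one endomorphism of $v$ in $\Centr_{\B}(U)$), so $v$ is an isomorphism and $\H_{2}(U,\b)=\Ker(v)=0$.

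For the converse, assume both homology objects vanish. Then $U$ is $\B$-perfect, so Theorem~\ref{Theorem-Universal-Central-Extension} delivers a universal $\B$-central extension of $U$ whose kernel $\H_{2}(U,\b)$ is zero; being a regular epimorphism with trivial kernel in a semi-abelian category, this extension is an isomorphism. Hence $1_{U}$ serves as a universal $\B$-central extension of~$U$, and consequently every $\B$-central extension of $U$ admits a section. Thus condition~1 of Proposition~\ref{Proposition-Universal-vs-Weak-Universal} holds, and that proposition gives the universality of the given $\B$-central extension $u\colon{U\to A}$.

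The main obstacle is the forward direction, specifically the splitting of $v$; this is precisely where \UCE\ is used, hidden inside Proposition~\ref{Proposition-Universal-vs-Weak-Universal-(UCE)}. Without \UCE, the composite $u\comp v$ of two $\B$-central extensions sharing the $\B$-perfect object $U$ need not be $\B$-central, so one cannot invoke the weak universality of $u$ to produce the required section of $v$, and the argument breaks down.
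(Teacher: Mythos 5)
Your proof is correct and follows essentially the same route as the paper's: both directions reduce to Proposition~\ref{Proposition-Universal-vs-Weak-Universal-(UCE)} combined with the short exact sequence of Theorem~\ref{Theorem-Universal-Central-Extension}, and you correctly identify the forward direction (the splitting of the universal $\B$-central extension of $U$) as the place where \UCE{} is needed. The only cosmetic difference is in the converse, where you pass directly to condition~1 of Proposition~\ref{Proposition-Universal-vs-Weak-Universal} once $1_{U}$ is recognised as a universal $\B$-central extension of $U$, whereas the paper invokes Proposition~\ref{Proposition-Universal-vs-Weak-Universal-(UCE)} a second time to obtain projectivity of $U$ with respect to $\B$-central extensions.
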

\begin{proof}
$\Rightarrow$ If $u\colon U\to A$ is a universal $\B$-central extension then by Proposition~\ref{Proposition-Universal-vs-Weak-Universal-(UCE)} we have $\H_{1}(U,\b)=\b(U)=0$ and $U$ is projective with respect to all $\B$-central extensions. This implies that $1_{U}\colon{U\to U}$ is a universal $\B$-central extension of~$U$. Theorem~\ref{Theorem-Universal-Central-Extension} now tells us that $\H_{2}(U,\b)=0$.

$\Leftarrow$ The object $U$ is $\B$-perfect because $\b(U) = \H_{1}(U,\b)=0$; since $\H_{2}(U,\b)$ is also zero, the universal $\B$-central extension $u_{U}^{\b}\colon{\U(U,\b)\to U}$ of $U$ induced by Theorem~\ref{Theorem-Universal-Central-Extension} is an isomorphism. Proposition~\ref{Proposition-Universal-vs-Weak-Universal-(UCE)} now implies that $U\cong \U(U,\b)$ is projective with respect to all $\B$-central extensions. Another application of Proposition~\ref{Proposition-Universal-vs-Weak-Universal-(UCE)} shows that $u$ is also a universal $\B$-central extension.
\end{proof}

\begin{proposition}\label{Proposition-UCE-domain}
Let $\A$ be a semi-abelian category with enough projectives satisfying \UCE\ and~$\B$ a Birkhoff subcategory of $\A$ that contains $\Ab(\A)$. Let $f\colon{B\to A}$ and $g\colon{C\to B}$ be $\B$-central extensions. Then the composite $f\comp g$ is a universal $\B$-central extension if and only if $g$ is.
\end{proposition}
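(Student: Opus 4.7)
The plan is to reduce both directions to the intrinsic characterisation provided by Proposition~\ref{Proposition-Universal-vs-Weak-Universal-(UCE)}: under our standing hypotheses \UCE\ and $\Ab(\A)\subseteq\B$, a $\B$-central extension is universal if and only if its domain is $\B$-perfect and projective with respect to all $\B$-central extensions. The crucial point is that this criterion depends only on the \emph{domain} object, not on the codomain or the map itself.

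For the ``if'' direction, I would assume that $g\colon{C\to B}$ is a universal $\B$-central extension. Proposition~\ref{Proposition-Universal-vs-Weak-Universal-(UCE)} then gives that $C$ is $\B$-perfect and projective with respect to all $\B$-central extensions. Applying Lemma~\ref{Lemma-Composition-Central-Extensions-Relative} with $u:=f$ and $v:=g$ shows that the composite $f\comp g\colon{C\to A}$ is again a $\B$-central extension. Since its domain~$C$ already satisfies the perfection-and-projectivity criterion, a second invocation of Proposition~\ref{Proposition-Universal-vs-Weak-Universal-(UCE)} yields that $f\comp g$ is universal.

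For the ``only if'' direction, assume $f\comp g$ is a universal $\B$-central extension. By Proposition~\ref{Proposition-Universal-vs-Weak-Universal-(UCE)}, its domain $C$ is $\B$-perfect and projective with respect to all $\B$-central extensions. Since $g\colon{C\to B}$ is by hypothesis a $\B$-central extension whose domain enjoys these properties, yet another application of Proposition~\ref{Proposition-Universal-vs-Weak-Universal-(UCE)} shows that $g$ itself is universal.

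No real obstacle arises: virtually all the substance has been absorbed into Lemma~\ref{Lemma-Composition-Central-Extensions-Relative} and Proposition~\ref{Proposition-Universal-vs-Weak-Universal-(UCE)}. The hypotheses \UCE\ and $\Ab(\A)\subseteq\B$ are used only tacitly, precisely to license both the domain-only characterisation of universality and the fact that $f\comp g$ remains $\B$-central in the backward direction.
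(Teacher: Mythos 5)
Your proof is correct and follows essentially the same route as the paper: invoke Lemma~\ref{Lemma-Composition-Central-Extensions-Relative} to get that $f\comp g$ is $\B$-central when $g$ is universal, then use the domain-only characterisation of universality from Proposition~\ref{Proposition-Universal-vs-Weak-Universal-(UCE)} for both directions, since $f\comp g$ and $g$ share the domain $C$. You merely spell out the two directions more explicitly than the paper does.
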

\begin{proof}
First note that when $g$ is a universal $\B$-central extension then $f\comp g$ is $\B$-central by Lemma~\ref{Lemma-Composition-Central-Extensions-Relative}. The central extensions $f\comp g$ and $g$ have the same domain, and by Proposition~\ref{Proposition-Universal-vs-Weak-Universal-(UCE)} their universality only depends on a property of this domain.
\end{proof}

Proposition~\ref{Proposition-Universal-vs-Weak-Universal-(UCE)} has the following partial converse, which shows that in some sense \UCE\ is necessary: if we want that conditions 1--5 in Proposition~\ref{Proposition-Universal-vs-Weak-Universal} are equivalent, independently of the chosen Birkhoff subcategory $\B$ with ${\Ab(\A)\subset\B\subset\A}$, then the category $\A$ \emph{must} satisfy \UCE. See also Remark~\ref{Remark-Counterexample}.

\begin{proposition}\label{Proposition-UCE-necessary}
Let $\A$ be a semi-abelian category with enough projectives and~$\B$ a Birkhoff subcategory of $\A$ containing $\Ab(\A)$. If in Proposition~\ref{Proposition-Universal-vs-Weak-Universal} conditions 1--5 are equivalent, then the following holds: if $B$ is a $\B$-perfect object, and~$f\colon{B\to A}$ and $g\colon{C\to B}$ are $\B$-central extensions, then the extension $f\comp g$ is $\B$-central. If, in particular, this happens for $\B=\Ab(\A)$, then $\A$ satisfies \UCE.
\end{proposition}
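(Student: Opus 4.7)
The plan is to use the hypothesis to pin down the universal $\B$-central extensions of $A$ and $B$ up to a canonical isomorphism, and then to prove centrality of $f\comp g$ by a direct commutator computation.

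Since $\b$ preserves regular epimorphisms and $\b(B)=0$, we have $\b(A)=0$, so $A$ is $\B$-perfect. Theorem~\ref{Theorem-Universal-Central-Extension} then yields universal $\B$-central extensions $u_A\colon U_A\to A$ and $u_B\colon U_B\to B$; by the hypothesis (applied as the implication $5\Rightarrow 2$ of Proposition~\ref{Proposition-Universal-vs-Weak-Universal}), the domains $U_A$ and $U_B$ are $\B$-perfect and projective with respect to every $\B$-central extension. Proposition~\ref{Proposition-Quotient-of-UCE} supplies a regular epimorphism $q\colon U_A\to B$ with $f\comp q=u_A$; one checks that $q$ is itself $\B$-central because its kernel is contained in $\Ker u_A$. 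Projectivity of $U_A$ with respect to $u_B$ produces $t'\colon U_A\to U_B$ with $u_B\comp t'=q$; factoring $t'=m\comp e$ with $e$ regular epic and $m$ monic, the sub-extension $u_B\comp m$ of $u_B$ is $\B$-central, and the universality of $u_B$ applied to it forces $m$ to be split epic, hence iso, so $t'$ is regular epic. Symmetrically, projectivity of $U_B$ with respect to $u_A$ yields $t\colon U_B\to U_A$ with $u_A\comp t=f\comp u_B$; the universal properties of $u_A$ and $u_B$ then force $t\comp t'=1_{U_A}$ and (using $q\comp t=u_B$, itself extracted from the bijection $\hom(U_B,f)$) $t'\comp t=1_{U_B}$. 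Identifying $U:=U_A=U_B$ we obtain $u_A=f\comp u_B$, and projectivity of $U$ with respect to $g$ then provides $s\colon U\to C$ with $g\comp s=u_B$, whence $(f\comp g)\comp s=u_A$.

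It remains to show that $f\comp g$ is $\B$-central, i.e.\ that $[K,C]_\B=0$ where $K=\Ker(f\comp g)$. Write $K_f=\Ker f$ and $K_g=\Ker g$. Naturality of the Birkhoff commutator construction along the morphism of extensions $(g,1_A)\colon(f\comp g)\to f$ sends $[K,C]_\B$ into $[K_f,B]_\B=0$, so $[K,C]_\B\subseteq K_g$ as a subobject of $C$. Because $g\comp s=u_B$ is regular epic, $s(U)\vee K_g=C$ in the subobject lattice of $C$; together with $K_g\subseteq K$, the Dedekind (modular) identity gives $K=H\vee K_g$ where $H:=K\cap s(U)$. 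The restriction $(f\comp g)|_{s(U)}\colon s(U)\to A$ is a regular-epic quotient of the $\B$-central extension $u_A$ through the regular-epic part of $s$, hence is itself $\B$-central (quotients of $\B$-central extensions are $\B$-central, as used in Lemma~\ref{Lemma-Adjunction}); its kernel equals $H$, so $[H,s(U)]_\B=0$. Expanding $[H\vee K_g,\,s(U)\vee K_g]_\B$ using the bilinearity of the $\B$-commutator over joins, the three terms involving a $K_g$ factor all lie inside $[K_g,C]_\B=0$, leaving only $[H,s(U)]_\B=0$; hence $[K,C]_\B=0$ and $f\comp g$ is $\B$-central.

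The main obstacle will be the very last step: the bilinearity of $[-,-]_\B$ over joins of normal subobjects, together with the handling of the possibly non-normal subobject $s(U)$ via the Dedekind identity, needs to be argued carefully using the standard commutator calculus for semi-abelian categories. The final sentence of the proposition is immediate once the first is established: \UCE\ is by definition precisely the composition property just proved, specialised to $\B=\Ab(\A)$.
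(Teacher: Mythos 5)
The first half of your argument is essentially sound: the existence of $u_A$ and $u_B$ (Theorem~\ref{Theorem-Universal-Central-Extension}), the regular epic comparison $q$ with $f\comp q=u_A$ (Proposition~\ref{Proposition-Quotient-of-UCE}), the use of the hypothesis in the form $5\Rightarrow 2$ of Proposition~\ref{Proposition-Universal-vs-Weak-Universal}, and the resulting lift $s\colon U\to C$ with $(f\comp g)\comp s=u_A$ all go through (two small caveats: the centrality of $q$ uses the fact that the second factor of a central composite is central, which is true but nowhere proved in the paper, and for $q\comp t=u_B$ you only have \emph{injectivity} of $\hom(U_B,f)$ from Lemma~\ref{Lemma-Perfect-and-Central}, not a bijection---which suffices). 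The genuine gap is the concluding commutator computation, and it is not a matter of ``arguing carefully'': the object $[K,C]_{\B}$ of Subsection~\ref{Subsection-Commutators-and-Centralisation} is defined from the kernel pair of the extension $f\comp g$; it is not a binary commutator of the subobjects $K$ and $C$, and no bilinearity-over-joins formula for it exists in a general semi-abelian category. If you replace it by an actual binary commutator of subobjects (Huq or Higgins), two further things fail at this level of generality: vanishing of such a commutator does not characterise $\B$-centrality in the Galois-theoretic sense (already for $\B=\Ab(\A)$ this is the Smith-versus-Huq problem), and the distributivity $[X\vee Y,Z]=[X,Z]\vee [Y,Z]$ is false in general, extra higher-order terms appearing. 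Moreover, the Dedekind/modular identity you invoke involves the non-normal subobject $s(U)$ and is not available for arbitrary subobjects of an object of a semi-abelian category. The whole reason \UCE\ is a non-trivial condition (cf.\ Example~\ref{Counter1}) is that precisely this kind of commutator arithmetic breaks down beyond groups and Lie algebras, so the step you yourself flag as the ``main obstacle'' is a real hole, not a routine verification.

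The paper finishes differently, and its route is the natural repair from the point you reached. Take the universal $\B$-central extension $u\colon U\to A$ and the regular epimorphism $\overline{f}\colon U\to B$ with $f\comp\overline{f}=u$ (Proposition~\ref{Proposition-Quotient-of-UCE}); instead of pushing a section into $C$ and computing with images inside $C$, pull $g$ back along $\overline{f}$. The pullback $\overline{g}\colon C\times_B U\to U$ is $\B$-central, splits by the hypothesis (condition~1 applied to $U$), hence is $\B$-trivial; the composite $u\comp\overline{g}$ is then $\B$-central because its pullback along $u$ is a composite of two trivial extensions; finally $f\comp g$ is a regular-epic quotient over $A$ of $u\comp\overline{g}$ (via the projection $C\times_B U\to C$), and central extensions are stable under such quotients, as already used in Lemma~\ref{Lemma-Adjunction}. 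This argument stays entirely within the closure properties of Subsection~\ref{Subsection-Central-Extensions} and avoids commutator calculus altogether; substituting it for your final step yields a complete proof, and the last sentence of the proposition is then, as you say, immediate from Definition~\ref{Condition-UCE}.
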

\begin{proof}
Let $B$ be a $\B$-perfect object and consider $\B$-central extensions $f\colon{B\to A}$ and $g\colon{C\to B}$. Let $u\colon {B\to A}$ be a universal $\B$-central extension. Then by Proposition~\ref{Proposition-Quotient-of-UCE} the uniquely induced comparison morphism $\overline{f}\colon{u\to f}$ is a regular epimorphism. Pulling back $g$ along $\overline{f}$ as in
\[
\xymatrix{C\times_{B}U \splitpullback \ar@{-{ >>}}[d]_-{\underline{f}} \ar@<-.5ex>@{-{ >>}}[r]_-{\overline{g}} & U \ar@<-.5ex>@{.>}[l] \ar@{-{ >>}}[r]^-{u} \ar@{-{ >>}}[d]^-{\overline{f}} & A \ar@{=}[d] \\
C \ar@{-{ >>}}[r]_-{g} & B \ar@{-{ >>}}[r]_-{f} & A}
\]
we obtain a splitting for $\overline{g}$ through Proposition~\ref{Proposition-Universal-vs-Weak-Universal}. Now the composite $u\comp \overline{g}$ is a~$\B$-central extension because its pullback $u^{*}(u\comp \overline{g})$ along $u$ is a $\B$-trivial extension, as a composite of two $\B$-trivial extensions (Subsection~\ref{Subsection-Central-Extensions}). Since $\underline{f}$ is a regular epimorphism by regularity of $\A$, the composite $f\comp g$ is a quotient of the $\B$-central extension~$u\comp \overline{g}$, hence is also $\B$-central.
\end{proof}

\UCEcapital\ allows us to obtain the following refinement of Proposition~\ref{Proposition-Comparison}.

\begin{proposition}\label{Proposition-Comparison-Plus}
Suppose that $\A$ satisfies \UCE\ and consider a chain of inclusions of Birkhoff subcategories $\Ab(\A)\subset \C\subset \B \subset \A$ with left adjoints as in~\eqref{Triangle}. If $B$ is a $\cc$-perfect object of $\B$ then $u^{\cc\b}_{\U(B,\cc)}=\eta_{\U(B,\cc\b)}^{\b}$ and
\[
[\U(B,\cc\b),\U(B,\cc\b)]_{\B}\cong\H_{2}(\U(B,\cc),\cc\b).
\]
Hence $u^{\cc\b}_{B}=u^{\cc}_{B}$ iff $\eta_{\U(B,\cc\b)}^{\b}$ is an isomorphism iff $\H_{2}(\U(B,\cc),\cc\b)$ is zero.
\end{proposition}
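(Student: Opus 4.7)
\emph{Plan.}
The strategy is to identify the unit $\eta_{\U(B,\cc\b)}^{\b}$ with a universal $\cc\b$-central extension of $\U(B,\cc)$, after which the second claim and the chain of equivalences at the end fall out of comparing kernels in the $3\times 3$ diagram of Figure~\ref{3x3}. As a preliminary observation, $\U(B,\cc)$ is $\cc$-perfect by Lemma~\ref{UCE-then-Perfect}, and being an object of $\B$, it is also $\cc\b$-perfect by Lemma~\ref{Lemma-Nesting}.1; Theorem~\ref{Theorem-Universal-Central-Extension} therefore provides a universal $\cc\b$-central extension $u^{\cc\b}_{\U(B,\cc)}\colon{\U(\U(B,\cc),\cc\b)\to \U(B,\cc)}$.

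The heart of the proof is the identification of this map with $\eta_{\U(B,\cc\b)}^{\b}$. I would argue as follows. The extension $u^{\cc}_{B}$ is $\cc\b$-central by Lemma~\ref{Lemma-Nesting}.2, so Proposition~\ref{Proposition-UCE-domain}---which is precisely where \UCE\ enters---implies that the composite $u^{\cc}_{B}\comp u^{\cc\b}_{\U(B,\cc)}$ is itself a universal $\cc\b$-central extension of $B$. Uniqueness of universal central extensions yields an isomorphism $\Phi\colon{\U(B,\cc\b)\to \U(\U(B,\cc),\cc\b)}$ with $u^{\cc}_{B}\comp u^{\cc\b}_{\U(B,\cc)}\comp \Phi = u^{\cc\b}_{B}$. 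On the other hand, Figure~\ref{3x3} gives $u^{\cc}_{B}\comp \eta^{\b}_{\U(B,\cc\b)} = u^{\cc\b}_{B}$. The universal property of $u^{\cc\b}_{B}$ applied to the $\cc\b$-central extension $u^{\cc}_{B}$ asserts that there is a \emph{unique} arrow $h\colon{\U(B,\cc\b)\to \U(B,\cc)}$ such that $u^{\cc}_{B}\comp h = u^{\cc\b}_{B}$; since both $\eta^{\b}_{\U(B,\cc\b)}$ and $u^{\cc\b}_{\U(B,\cc)}\comp \Phi$ have this property, they coincide. Up to the identification afforded by $\Phi$, this is the first claim.

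Given this identification, comparing kernels of equal morphisms yields the second claim: $[\U(B,\cc\b),\U(B,\cc\b)]_{\B} = \Ker(\eta^{\b}_{\U(B,\cc\b)}) \cong \Ker(u^{\cc\b}_{\U(B,\cc)}) = \H_{2}(\U(B,\cc),\cc\b)$, where the last equality is part of Theorem~\ref{Theorem-Universal-Central-Extension}. The final chain of equivalences then assembles from: Proposition~\ref{Proposition-Comparison}, which gives $u^{\cc\b}_{B}=u^{\cc}_{B}$ iff $[\U(B,\cc\b),\U(B,\cc\b)]_{\B}$ is zero; the tautology that this commutator vanishes iff the morphism of which it is the kernel, namely $\eta_{\U(B,\cc\b)}^{\b}$, is an isomorphism; and the second claim, which exchanges the commutator for $\H_{2}(\U(B,\cc),\cc\b)$.

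The only delicate point is the identification step, where Proposition~\ref{Proposition-UCE-domain} (and hence \UCE) is indispensable; the rest is bookkeeping around universal properties and the already established $3\times 3$ diagram.
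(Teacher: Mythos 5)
Your proof is correct, and it rests on the same two pillars as the paper's own argument: Proposition~\ref{Proposition-UCE-domain} (which is where \UCE\ enters) applied to a factorisation of a universal $\cc\b$-central extension of $B$ through $u^{\cc}_{B}$, together with the $3\times 3$ diagram underlying Proposition~\ref{Proposition-Comparison}. The only real divergence is in how the identification $u^{\cc\b}_{\U(B,\cc)}=\eta_{\U(B,\cc\b)}^{\b}$ is made: the paper first observes, via Remark~\ref{Remark-Subobject}, that $\eta_{\U(B,\cc\b)}^{\b}$ is itself $\cc\b$-central, being a sub-extension of $u^{\cc\b}_{B}$, and then applies Proposition~\ref{Proposition-UCE-domain} in the direction ``composite universal $\Rightarrow$ second factor universal'' to the equality $u^{\cc\b}_{B}=u^{\cc}_{B}\comp\eta_{\U(B,\cc\b)}^{\b}$, whereas you apply that proposition in the opposite direction to the abstractly constructed $u^{\cc\b}_{\U(B,\cc)}$ and then recover $\eta_{\U(B,\cc\b)}^{\b}$ from uniqueness of universal central extensions combined with uniqueness of the comparison morphism into the $\cc\b$-central extension $u^{\cc}_{B}$. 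Your route is slightly longer but never needs to verify directly that $\eta_{\U(B,\cc\b)}^{\b}$ is central (this falls out of the identification), while the paper's is shorter because that centrality check is a one-line consequence of Remark~\ref{Remark-Subobject}. The remaining kernel comparison via Theorem~\ref{Theorem-Universal-Central-Extension} and the closing chain of equivalences via Proposition~\ref{Proposition-Comparison} are exactly as in the paper.
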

\begin{proof}
In view of Remark~\ref{Remark-Subobject}, $\eta_{\U(B,\cc\b)}^{\b}$ is $\cc\b$-central as a subobject of $u^{\cc\b}_{B}$; hence Proposition~\ref{Proposition-UCE-domain} implies that it is a universal $\cc\b$-central extension of~$\U(B,\cc)$. The result now follows from Theorem~\ref{Theorem-Universal-Central-Extension}.
\end{proof}

\section{Examples}\label{Section-Examples}
As mentioned in the introduction, our theory is based on the cases of groups (with respect to abelian groups) and Lie algebras over a field~$\K$ (with respect to $\K$-vector spaces). It captures results in~\cite{Gn, LP} for $\K$-Leibniz algebras (with respect to $\K$-vector spaces) and gives new results when considering the reflection from $\Leibniz_{\K}$ to $\Lie_{\K}$ (cf.\ Section~\ref{Section-Nested-Birkhoff-Subcategories}). The chain of reflections $\PXMod\to \XMod\to \Ab\XMod$ from precrossed modules to crossed modules to abelian crossed modules also corresponds to the situation considered in Section~\ref{Section-Nested-Birkhoff-Subcategories}. Thus we regain results in~\cite{ACL, ALG, ALG2, Carrasco-Homology}. 

Via Proposition~\ref{Proposition-UCE-necessary}, the very existence of working theories already shows that in those examples \UCE\ holds; and it is easy to find further examples. Thus for the rest of the paper we focus on giving counterexamples.

\subsection{Two counterexamples}\label{Subsection-Counterexamples}
Our first counterexample is borrowed from~\cite{CIP}. It shows that a category---here the category $\NAAlg_\K$ of non-associative algebras over a field $\K$, which is a variety of $\Omega$-groups---can be semi-abelian without having to satisfy \UCE. This means that $\NAAlg_\K$ does not quite match the picture sketched in Section~\ref{Section-UCE}.

We must also emphasise that \UCE\ by itself is not yet strong enough to yield results such as Proposition~\ref{Proposition-Universal-vs-Weak-Universal-(UCE)} or Theorem~\ref{Theorem-Homology}, unless $\Ab(\A)$ is contained in $\B$. Example~\ref{Counter2}, which explains this, was offered to us by George Peschke. It describes a universal~$\B$-central extension $u\colon{U\to A}$ such that $\H_{2}(U,\b)$ is not trivial---and indeed one of the assumptions of Theorem~\ref{Theorem-Homology} is violated: the Birkhoff subcategory~$\B$ of the \mbox{(semi-)}abelian category $\A$ which we shall consider is strictly smaller than $\Ab(\A)$.

\begin{example}\label{Counter1}
A \defn{non-associative algebra} $A$ is a vector space over a field $\K$ equipped with a bilinear operation $[\cdot , \cdot]\colon A \times A \to A$. Unlike for Leibniz or Lie algebras (or for associative ones), the bracket need not satisfy any additional conditions. We write $\NAAlg_\K$ for the category of non-associative algebras over~$\K$ and remark that it coincides with the category of $\hom$-Leibniz algebras of which the twisting map is trivial ($\alpha=0$ in the notations of~\cite{MS, CIP}) and with the category of magmas in the monoidal category $(\Vect_{\K}, \tensor, \K)$. Note that $\Leibniz_{\K}$, and hence also~$\Lie_\K$ and $\Vect_\K$, are subvarieties of $\NAAlg_\K$. Furthermore, an algebra is abelian if and only if it has a trivial bracket, so precisely when it ``belongs to'' $\Vect_\K$. It is easily seen that an extension $f\colon{B\to A}$ in $\NAAlg_\K$ is $\Vect_{\K}$-central if and only if its kernel is contained in the centre of $B$, the object ${Z(B)=\{z\in B\mid\text{$[z,b]=0=[b,z]$ for all $b\in B$}\}}$.
 
In~\cite{CIP} the following situation is considered: morphisms $g\colon C\to B$ and $f\colon B\to A$ where as vector spaces, $A$, $B$ and $C$ are $2$-, $3$- and $4$-dimensional with respective bases $\{a_{1},a_{2}\}$, $\{b_{1},b_{2},b_{3}\}$ and $\{c_{1},c_{2},c_{3},c_{4}\}$. Their brackets are generated by
\begin{gather*}
[a_{2},a_{1}]=a_{2},\quad [a_{2},a_{2}]=a_{1}\\
[b_{2},b_{2}]=b_{1},\quad [b_{3},b_{2}]=b_{3}, \quad [b_{3},b_{3}]=b_{2}\\
[c_{3},c_{2}]=c_{1}, \quad [c_{3},c_{3}]=c_{2}, \quad [c_{4},c_{3}]=c_{4} \quad [c_{4},c_{4}]=c_{3}
\end{gather*}
and zero elsewhere. The algebras $B$ and $A$ are $\Vect_{\K}$-perfect because $[B,B]=B$. The morphism of non-associative algebras $f$ sends $(b_{1},b_{2},b_{3})$ to $(0,a_{1},a_{2})$ and $g$ sends $(c_{1},c_{2},c_{3},c_{4})$ to $(0,b_{1},b_{2},b_{3})$. The kernel of $f$ is generated by $b_{1}$ and thus equal to the centre $Z(B)$ of $B$. Hence $f$ is $\Vect_{\K}$-central. Likewise, $\Ker(g)$ is generated by $c_{1}$ and thus equal to $Z(C)$, so that $g$ is $\Vect_{\K}$-central. On the other hand, the kernel of~$f\comp g$ contains $c_{2}$, so that $f\comp g$ cannot be $\Vect_{\K}$-central.
\end{example}

\begin{remark}\label{Remark-Counterexample}
Combining Example~\ref{Counter1} with Proposition~\ref{Proposition-UCE-necessary} we obtain a contradiction with the statement of Proposition~6.3 in~\cite{Cheng-Su}. It entails the equivalence of all conditions in Proposition~\ref{Proposition-Universal-vs-Weak-Universal}, which would mean that the category of $\hom$-Leibniz algebras over $\K$ satisfies \UCE. We know, however, that already its subvariety $\NAAlg_{\K}$ does not, which through Lemma~\ref{Lemma-Nesting} leads to a clash. It appears that the proof given in~\cite{Cheng-Su} does not explain the second half of the ``necessary condition''.
\end{remark}

\begin{example}\label{Counter2}
Let $C$ be the infinite cyclic group (with its generator written $c\in C$) and $R=\Z[C]$ the integral group-ring over $C$. We take $\A$ to be the (abelian) category ${}_{R}\Mod$ of modules over~$R$, so that $\Ab(\A)=\A$ and \UCE\ holds. We consider its full subcategory $\B$ of all $R$-modules with a trivial $C$-action; it is clearly Birkhoff in $\A$, and its reflector is determined by tensoring with the trivial $R$-module $\Z$, so that ${\b(M)=\Z\tensor_{R}M}$ for any $R$-module $M$.

Now consider a prime number $p\neq 2$ and let $M$ be the $R$-module $\bigvee_{k\geq 1}M_{k}$, where~$M_{k}$ for $k\geq 1$ is the abelian group $\Z_{p^{k}}=\Z/_{p^{k}\Z}$ equipped with the $C$-action
\[
c\cdot m=(1-p)\cdot m.
\]
Note that a natural inclusion of $R$-modules $M_{k}\to M_{k+1}$ is given by
\[
(l+p^{k}\Z)\mapsto (p\cdot l+p^{k+1}\Z).
\]
Then it may be checked that $\H_{2}(M,\b)=\H_{2}(C,M)\cong \Z_{p}\neq 0$, while $M$ is $\B$-perfect, and
\[
u\colon M\to M\colon m\mapsto p\cdot m
\]
is a universal $\B$-central extension.
\end{example}

\section*{Acknowledgements}
We would like to thank the referee, Tomas Everaert, Julia Goedecke and George Peschke for some invaluable suggestions.

%

\providecommand{\noopsort}[1]{}
\providecommand{\bysame}{\leavevmode\hbox to3em{\hrulefill}\thinspace}
\providecommand{\MR}{\relax\ifhmode\unskip\space\fi MR }
\providecommand{\MRhref}[2]{%
 \href{http://www.ams.org/mathscinet-getitem?mr=#1}{#2}
}
\providecommand{\href}[2]{#2}

\end{document}